\newtheorem{theorem}{Theorem}[section]
\newtheorem{lemma}[theorem]{Lemma}
\newtheorem{proposition}[theorem]{Proposition}
\newtheorem{corollary}[theorem]{Corollary}
\theoremstyle{definition}
\newtheorem{definition}[theorem]{Definition}
\newtheorem{example}[theorem]{Example}
\newtheorem{examples}[theorem]{Examples}
\newtheorem{remarks}[theorem]{Remarks}
\newcommand{\bbE}{\mathbb E}
\newcommand{\bbZ}{\mathbb Z}
\newcommand{\bbR}{\mathbb R}
\newcommand{\bbH}{\mathbb H}
\numberwithin{equation}{section}
\begin{document}

\title[Winding Numbers of Curves on Aspherical Surfaces]{Winding Numbers of Regular Closed Curves\\
on Aspherical Surfaces}

\author{Masayuki YAMASAKI}

\address{Department of Applied Science, Okayama University of Science, 
Okayama, Okayama 700-0005, Japan, 
E-mail: yamasaki@surgery.matrix.jp}

\begin{abstract}
We introduce based winding numbers $w_{\widetilde p}$ and free winding numbers $W_{\widetilde\gamma_0}$
of regular closed curves on surfaces with a nice euclidean or hyperbolic geometry.
We show that (1) regular closed curves with the same base point $p$ are
regularly homotopic fixing the base point if and only if they represent
the same element of the fundamental group and
have the same based winding number $w_{\widetilde p}$
and (2) two regular closed curves which are freely homotopic to a fixed closed curve $\gamma_0$
are regularly homotopic if and only if they have the same free winding number
$W_{\widetilde\gamma_0}$.
These two winding numbers are integers if the curve is orientation preserving,
and are integers modulo 2 if the curve is orientation reversing.
\end{abstract}

\maketitle


\section{Introduction}

\noindent
Whitney-Graustein theorem says that two regular closed curves on the euclidean plane
$\bbE^2$ are regularly homotopic if and only if they have the same `rotation number'
({\it a.k.a.}~`winding number')\cite{HW}.
Smale classified regular closed curves on an arbitrary surface up to regular homotopy
\cite{SS58}, 
but he did not define the winding numbers.
Winding numbers of closed curves on surfaces were introduced by Reinhart \cite{BLR}
when the surface is orientable, and later extended by Chillingworth in the non-orientable case
\cite{DRJC}.
These works are successful for surfaces with spherical or euclidean geometry, but they are too week
to give a regular homotopy classification in the case of hyperbolic surfaces.
The aim of this paper is to introduce two types of winding numbers of a regular closed
curve on a surface with nice euclidean or hyperbolic geometry.

Let us review how the classical winding number on the plane was defined.
A regular closed curve $\gamma:[a,b]\to \bbE^2$ induces an angle function
$\theta:[a,b]\to\bbR$ which measures the angle of the tangent vector from the positive direction
of the $x$-axis, and the winding number is defined to be $(\theta(b)-\theta(a))/2\pi$.
In the case of an arbitrary surface, there is no preferred direction.
The basic idea of Reinhart and Chillingworth is to use a continuous vector field $X$
on the surface to measure how the tangent vector of the curve rotates with respect to $X$.
This is fine in the euclidean case, because there are vector fileds with no singular
points on them.
In the other cases, we need to allow the vector field  to  have singular points.
Then, when a regular homotopy hits a singular point of $X$, the `winding number' jumps!
Therefore, winding numbers are defined in some quotient $\bbZ/m\bbZ$ of $\bbZ$.
Luckily, in the spherical case the $\bbZ/2\bbZ$-winding number can be used to classify
the regular closed curves.  On the sphere there are two regular homotopy classes; 
on the projective plane there are two free homotopy classes of closed curves and
in each of them there are two regular homotopy classes
which can be distinguished by the $\bbZ/2\bbZ$-winding numbers.

As long as we stick to vector fields on the surface, we will never get a winding number
which is strong enough to classify the regular homotpy classes.
Since the spherical surfaces are quite exceptional among the surfaces,
I propose to study only aspherical surfaces and use non-vanishing vector fields
on the universal covers of the surfaces.  Slightly more precisely,
we consider a riemannian surface $M$ with a nice  conformal embedding of
the universal cover $\widetilde M$ into the euclidean plane $\bbE^2$.
Such an embedding is called a {\it tame conformally euclidean structure} of $M$.
See \ref{def:ces} for the detail.
Please note that, in this paper,  a conformal map means a map
which preserves non-oriented angles at each point.
Let $\gamma:[a,b]\to M$ be a regular closed curve on such a surface $M$ with base point $p\in M$.
Fix a point $\widetilde p$ in $\widetilde M$ over $p$,
and take a lift $\widetilde\gamma:[a,b]\to \widetilde M\subset \bbE^2$
with $\widetilde\gamma(a)=\widetilde p$.
As the vector field $X$ on $\widetilde M$, we use the vector $(1,0)$ at every point
of $\widetilde M$.
Note that winding numbers can be defined not only for regular closed curves
but also for non-closed regular curves on $\bbE^2$. 
For example, a half circle has winding number $\pm 1/2$.
Such an extension is called the {\it $i$-index} of a regular curve $\widetilde\gamma$,
and is denoted $i_{\widetilde\gamma}$:
\[
i_{\widetilde\gamma}=\dfrac{\theta_{\widetilde\gamma}(b)-\theta_{\widetilde\gamma}(a)}{2\pi} \in\bbR,
\]
where $\theta_{\widetilde\gamma}$ is the angle function of $\widetilde\gamma$.
This does not depend on the choice of the angle function,
and it gives a classification under regular homotopy which fixes the base point
and the base direction (\ref{thm:main0}).
We also define the {\it $j$-index} $j_{\widetilde\gamma}$ by
\[
j_{\widetilde\gamma}=\dfrac{\theta_{\widetilde\gamma}(b)+\theta_{\widetilde\gamma}(a)}{2\pi}\in\bbR.
\]
This does depend on the choice of the angle function, but it gives a well-defined mod 2 class
in $\bbR/2\bbZ$.

Next, suppose we locally rotate $\gamma$ around the base point.
If $\gamma$ is orientation preserving, then the $i$-index $i_{\widetilde\gamma}$
of $\widetilde\gamma$ does not change; 
if $\gamma$ is orientation reversing, then the $j$-index $j_{\widetilde\gamma}$
of $\widetilde\gamma$ does not change (\ref{thm:rotation}).
When $[\gamma]=1\in\pi_1(M,p)$, $\widetilde\gamma$ is closed and 
we define the {\it based winding number} $w_{\widetilde p}(\gamma)$ of $\gamma$
to be $i_{\widetilde\gamma}\in\bbZ$.
For a curve $\gamma$ which represents a non-trivial element of $\pi_1(M,p)$,
we choose the shortest geodesic $\delta$ on $M$ 
which represents the same element of $\pi_1 (M,p)$ as $\gamma$, and take 
a lift $\widetilde\delta$ of $\delta$ which starts at a lift $\widetilde p$ of $p$.
Let $\chi$ be a suitably defined {\it external angle} (\ref{def:ext-a}) of $\delta$ at the vertex $p$.
If the curve $\gamma$ on $M$ is orientation preserving,
the {\it based winding number} $w_{\widetilde p}(\gamma)$ of $\gamma$
is defined to be $i_{\widetilde\gamma}-i_{\widetilde\delta}-\dfrac{\chi}{2\pi}$,
which turns out to be an integer.
If $\gamma$ is orientation reversing,
the {\it based winding number} $w_{\widetilde p}(\gamma)$ of $\gamma$
is defined to be the mod 2 class
$\left(j_{\widetilde\gamma}-j_{\widetilde\delta}-\dfrac{\chi}{2\pi}\right)+2\bbZ$
($\in\bbZ/2\bbZ$).
When $M$ is orientable, this number $w_{\widetilde p}(\gamma)$ does not depend on the choice of
the lift $\widetilde p$ of $p$.  When $M$ is non-orientable, the values of
$w_{\widetilde p}(\gamma)$ and $w_{\overline p}(\gamma)$ with respect to two different lifts
$\widetilde p$ and $\overline p$ of $p$ are related by the equation
$w_{\overline p}(\gamma)=\varepsilon(\widetilde p,\overline p) w_{\widetilde p}(\gamma)$,
where $\varepsilon(\widetilde p,\overline p)$ is $+1$/$-1$ if the deck-transformation
which sends $\widetilde p$ to $\overline p$ is orientation preserving/reversing
(\ref{thm:wn-lifts}).
This sign problem occurs only when $\gamma$ is orientation preserving.
In \S3,  we will prove the following:


\begin{theorem}  \label{thm:main1} Let $\gamma_1$ and $\gamma_2$ be regular closed curves on
a surface $M$ with a tame conformally euclidean structure $\widetilde M\subset \bbE^2$.
Suppose that they have the same base point $p$ and fix a lift $\widetilde p$ of $p$ in $\widetilde M$.
Then the following are equivalent.
\par \noindent
{\rm (1)} $\gamma_1$ and $\gamma_2$ are regularly homotopic fixing the base point.
\par \noindent
{\rm (2)} $[\gamma_1]=[\gamma_2]\in\pi_1(M,p)$ and
$w_{\widetilde p}(\gamma_1)=w_{\widetilde p}(\gamma_2)$.
\end{theorem}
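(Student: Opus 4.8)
The plan is to prove the two implications separately, using two inputs from the earlier sections: the classification of regular curves by the $i$-index under regular homotopy fixing the base point and base direction (\ref{thm:main0}), and the rotation-invariance of the $i$- and $j$-indices (\ref{thm:rotation}). Throughout, write $g\in\pi_1(M,p)$ for the common class $[\gamma_1]=[\gamma_2]$ (once it is known to be common), regarded also as a deck transformation of $\widetilde M$; let $\delta$ be the shortest geodesic representing $g$ and $\widetilde\delta$ its lift with $\widetilde\delta(a)=\widetilde p$, so that $\widetilde\delta$ and any lift of $\gamma_i$ starting at $\widetilde p$ both terminate at $g\widetilde p$, and $i_{\widetilde\delta}$ and the external angle $\chi$ depend only on $g$. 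The key structural fact I would use is that, because $g$ acts conformally on $\widetilde M\subset\bbE^2$, its differential $dg_{\widetilde p}$ is a euclidean similarity, hence acts on tangent directions at $\widetilde p$ by $\theta\mapsto\theta+c$ (if $g$ preserves orientation) or $\theta\mapsto-\theta+c$ (if $g$ reverses it) for a constant $c=c(g)$; and smoothness of a closed curve at $p$ forces its lift $\widetilde\gamma$ to satisfy $\widetilde\gamma'(b)=dg_{\widetilde p}(\widetilde\gamma'(a))$.

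For $(1)\Rightarrow(2)$: a regular homotopy fixing the base point is in particular a based homotopy, so $[\gamma_1]=[\gamma_2]=g$. Lift the regular homotopy $H_t$ from $\gamma_1$ to $\gamma_2$ to a regular homotopy $\widetilde H_t$ of curves in $\widetilde M$ with $\widetilde H_t(a)=\widetilde p$; as each $H_t$ is a loop at $p$ the terminal point $\widetilde H_t(b)$ lies in the discrete fibre over $p$ and is therefore the constant $g\widetilde p$. Each $\widetilde H_t$ satisfies the gluing relation $\widetilde H_t'(b)=dg_{\widetilde p}(\widetilde H_t'(a))$, so $\theta_{\widetilde H_t}(b)\mp\theta_{\widetilde H_t}(a)$ (sign according to the orientation behaviour of $g$) is a continuous, $2\pi\bbZ$-valued function of $t$, hence constant; equivalently $i_{\widetilde H_t}$ (orientation preserving) or $j_{\widetilde H_t}$ (orientation reversing) is constant in $t$. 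Since $i_{\widetilde\delta}$ and $\chi$ do not change, this is exactly $w_{\widetilde p}(\gamma_1)=w_{\widetilde p}(\gamma_2)$, interpreted modulo $2$ in the orientation-reversing case.

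For $(2)\Rightarrow(1)$: fix lifts $\widetilde\gamma_1,\widetilde\gamma_2\colon[a,b]\to\widetilde M$ with $\widetilde\gamma_i(a)=\widetilde p$, hence $\widetilde\gamma_i(b)=g\widetilde p$. Cancelling the common terms $i_{\widetilde\delta}$ and $\chi$, the hypothesis $w_{\widetilde p}(\gamma_1)=w_{\widetilde p}(\gamma_2)$ gives $i_{\widetilde\gamma_1}=i_{\widetilde\gamma_2}$ when $\gamma$ is orientation preserving, and $j_{\widetilde\gamma_1}\equiv j_{\widetilde\gamma_2}\pmod 2$ when $\gamma$ is orientation reversing. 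I would then normalize $\gamma_2$ by a local rotation about $p$ — a regular homotopy fixing $p$, along which the indices behave as in \ref{thm:rotation} — choosing the rotation angle so that the base direction of the rotated $\gamma_2$ agrees with that of $\gamma_1$ at $\widetilde p$. The gluing relation $\widetilde\gamma'(b)=dg_{\widetilde p}(\widetilde\gamma'(a))$ then makes the base directions agree automatically at $g\widetilde p$ as well; and a short bookkeeping with the angle functions — using $i_{\widetilde\gamma_1}=i_{\widetilde\gamma_2}$ in the orientation-preserving case, and in the orientation-reversing case using $j_{\widetilde\gamma_1}\equiv j_{\widetilde\gamma_2}\pmod 2$ together with the freedom to enlarge the rotation angle by multiples of $2\pi$ (which in that case shifts the $i$-index by an even integer while leaving the base directions fixed) — arranges $i_{\widetilde\gamma_1}=i_{\widetilde\gamma_2}$ on the nose. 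By \ref{thm:main0} there is then a regular homotopy inside $\widetilde M$ from $\widetilde\gamma_1$ to the rotated $\widetilde\gamma_2$ fixing the two endpoints $\widetilde p,g\widetilde p$ and the base directions there; projecting it along $\widetilde M\to M$ gives a regular homotopy from $\gamma_1$ to the rotated $\gamma_2$ fixing $p$, and composing with the local rotation finishes the proof. The case $g=1$, where $\widetilde\gamma_i$ is already closed, $\widetilde p=g\widetilde p$, and $w_{\widetilde p}(\gamma_i)=i_{\widetilde\gamma_i}$ with no geodesic correction, is the same argument with $\widetilde\delta$ empty.

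The step I expect to be the main obstacle is the orientation-reversing part of $(2)\Rightarrow(1)$: there one only has the equality of $j$-indices modulo $2$, and one must see that the asymmetry forced by an orientation-reversing $dg_{\widetilde p}$ (which sends $\theta\mapsto-\theta+c$, so that a local rotation changes $\theta_{\widetilde\gamma}(a)$ and $\theta_{\widetilde\gamma}(b)$ with opposite signs) combines with the $2\pi\bbZ$ of rotational freedom to land the $i$-index exactly, so that \ref{thm:main0} applies. A secondary point requiring care is that \ref{thm:main0} should be invoked so as to produce a homotopy staying inside $\widetilde M$ — legitimate because $\widetilde M$ is an open, simply connected subset of $\bbE^2$ — so that it descends to $M$; and, in $(1)\Rightarrow(2)$, that the lifted homotopies genuinely keep both endpoints fixed so that the continuity-plus-integrality argument applies.
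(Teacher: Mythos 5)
Your proposal is correct and takes essentially the same route as the paper: the implication (1)$\Rightarrow$(2) via continuity of the lifted indices together with discreteness of the winding number's value set, and (2)$\Rightarrow$(1) via local rotations (using the invariance of \ref{thm:rotation}), with the orientation-reversing case settled by an additional full rotation through a multiple of $2\pi$ that shifts the $i$-index by an even integer while fixing the base direction, after which \ref{thm:main0} applies. The only small caveat is that keeping the planar regular homotopy inside $\widetilde M$ is justified by the tameness condition (2) of \ref{def:ces} (already built into the proof of \ref{thm:main0}), not merely by $\widetilde M$ being an open simply connected subset of $\bbE^2$.
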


We next study the behavior of $w_{\widetilde p}(\gamma)$
when we change $\gamma$ by a free regular homotopy in \S4.
In the case (W1) when $M$ is orientable and in the case (W2) when $M$ is non-orientable and 
$\gamma$ is orientation reversing,
we define the {\it free winding number} or simply the {\it winding number}
$W(\gamma)$ of $\gamma$ to be $w_{\widetilde p}(\gamma)$,
where $\widetilde p$ is any lift of $p$.
It is an element of $\bbZ$ in the case (W1) and an element of $\bbZ/2\bbZ$ in the case (W2).

When $M$ is non-orientable and $\gamma$ is orientation preserving, there are two cases.
The first case is (W3) in which $\gamma$ is `reversible'.
Let me explain this notion by showing two of the most typical examples.
Consider the M\"obius band $M=MB$ with the standard flat metric, and 
let $\gamma_0$ be the centerline of $M$.
Let $p$ be the base point of $\gamma_0$ and $\widetilde p$ be a lift of $p$
in the universal cover.
\begin{itemize}
\item[(1)]  Consider a regular closed curve $\gamma$ which is based at $p$ and is contained
in a small disc neighborhood of the base point $p$.  Move $\gamma$ along $\gamma_0$.
When the base point of $\gamma$ comes back to $p$, it is a `mirror image'
of the original curve, so its based winding number is $-w_{\widetilde p}(\gamma)$.
\item[(2)] Let $\gamma_0^2$ be the composite loop of $\gamma_0$ with itself.
Add a kink to $\gamma_0^2$ to get a regular closed curve $\gamma_1$ and 
add the opposite kink to $\gamma_0^2$ to get another curve $\gamma_2$.
One of the based winding numbers $w_{\widetilde p}(\gamma_1)$ and $w_{\widetilde p}(\gamma_2)$
is $+1$ and the other is $-1$.
So they are not regularly homotopic fixing the base point.
But if we rotate $\gamma_2$ along $\gamma_0$, then we obtain $\gamma_1$; 
{\it i.e.} $\gamma_1$ and $\gamma_2$ are regularly homotopic.
\end{itemize}
The curves $\gamma$, $\gamma_1$, and $\gamma_2$ above are `reversible'.
See \ref{def:rev} for a precise definition.
In the case (W3) when $\gamma$ is `reversible', we define the ({\it free}) 
{\it winding number}
 $W(\gamma)$ of $\gamma$ to be $|w_{\widetilde p}(\gamma)|\in\bbZ_+ =\{n\in\bbZ | n\geq 0\}$,
where $\widetilde p$ is any lift of $p$.

In the `non-reversible' case (W4), we need to choose a closed loop $\gamma_0$ from each 
free homotopy class of closed loops and a lift $\widetilde \gamma_0$ of $\gamma_0$.
For a regular closed curve $\gamma$ on $M$ which is freely homotopic
to $\gamma_0$, we define the ({\it free}) {\it winding number}
$W_{\widetilde\gamma_0}(\gamma)$ of $\gamma$
to be the based winding number $w_{\widetilde p}(\gamma)\in\bbZ$, where 
$\widetilde p$ is the end point of the lift of the trace of the base point by 
any free homotopy from $\gamma_0$ to $\gamma$.

The following is the main theorem of this paper.
In the cases (W1), (W2), and (W3), $W_{\widetilde\gamma_0}(\gamma_i)$ means $W(\gamma_i)$.

\begin{theorem}  \label{thm:main2} Let $M$ be a surface with a tame conformally euclidean structure
$\widetilde M\subset \bbE^2$, and $\gamma_0$ be a closed curve on $M$.
Suppose two regular closed curves $\gamma_1$ and $\gamma_2$ are freely homotopic
to $\gamma_0$ keeping the curve closed.
Then the following are equivalent.
\par \noindent
{\rm (1)} $\gamma_1$ and $\gamma_2$ are regularly homotopic.
\par \noindent
{\rm (2)} $W_{\widetilde\gamma_0}(\gamma_1)=W_{\widetilde\gamma_0}(\gamma_2)$.
\end{theorem}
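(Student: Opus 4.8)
The plan is to deduce Theorem \ref{thm:main2} from Theorem \ref{thm:main1} by transporting both curves to a single base point. Write $p_0$ for the base point of $\gamma_0$, put $g_0=[\gamma_0]\in\pi_1(M,p_0)$, recall that $\widetilde\gamma_0$ starts at a fixed lift $\widetilde p_0$ of $p_0$, write $\overline\beta$ for the reverse of a path $\beta$, write $*$ for concatenation, and abbreviate ``regularly homotopic'' by ``$\simeq$''. The key tool is a \emph{dragging lemma}: if $\gamma$ is a regular closed curve based at $q$, $\widetilde\gamma$ is the lift of $\gamma$ starting at a lift $\widetilde q$, and $\beta$ is a path from $q$ to $q'$ with lift $\widetilde\beta$ from $\widetilde q$ to $\widetilde q'$, then there is a regular homotopy carrying $\gamma$ to a regular closed curve $\beta^{\#}\gamma$ based at $q'$ (concretely, a smoothing of $\overline\beta*\gamma*\beta$) whose base-point trace lifts to $\widetilde\beta$, with $[\beta^{\#}\gamma]=[\beta]^{-1}[\gamma][\beta]$ and $w_{\widetilde q'}(\beta^{\#}\gamma)=w_{\widetilde q}(\gamma)$ --- an equality in $\bbZ$ when $\gamma$ is orientation preserving and in $\bbZ/2\bbZ$ when it is orientation reversing. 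Existence of the regular homotopy and compatibility of lifts are soft; the content is that the based winding number is unchanged once the chosen lift of the base point is transported along $\widetilde\beta$.

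Granting this, (2)$\Rightarrow$(1) is short. For $\gamma$ freely homotopic to $\gamma_0$, pick a free homotopy from $\gamma_0$ to $\gamma$ keeping the curve closed, let $\alpha$ be its base-point trace and $\widetilde\alpha$ the lift of $\alpha$ from $\widetilde p_0$; then $\widetilde p:=\widetilde\alpha(1)$ is the lift used in the definition of $W_{\widetilde\gamma_0}(\gamma)$, so $W_{\widetilde\gamma_0}(\gamma)=w_{\widetilde p}(\gamma)$, or its absolute value in case (W3). Dragging $\gamma$ along $\overline\alpha$ gives $\gamma'\simeq\gamma$ based at $p_0$ with $[\gamma']=[\alpha][\gamma][\alpha]^{-1}=g_0$ and $w_{\widetilde p_0}(\gamma')=w_{\widetilde p}(\gamma)$. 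Now let $\gamma_1,\gamma_2$ be as in the theorem with $W_{\widetilde\gamma_0}(\gamma_1)=W_{\widetilde\gamma_0}(\gamma_2)$, and form $\gamma_1',\gamma_2'$ this way: then $[\gamma_1']=g_0=[\gamma_2']$, and either $w_{\widetilde p_0}(\gamma_1')=w_{\widetilde p_0}(\gamma_2')$, so Theorem \ref{thm:main1} gives $\gamma_1\simeq\gamma_1'\simeq\gamma_2'\simeq\gamma_2$, or we are in case (W3) with $w_{\widetilde p_0}(\gamma_1')=-w_{\widetilde p_0}(\gamma_2')$; in the latter case reversibility of $\gamma_0$ (Definition \ref{def:rev}) supplies an orientation-reversing free regular self-homotopy of $\gamma_0$ whose base-point trace $\ell$ represents a class commuting with $g_0$ with $\varepsilon([\ell])=-1$, and dragging $\gamma_2'$ along $\ell$ yields a curve based at $p_0$ still representing $g_0$ whose based winding number at $\widetilde p_0$ equals $w_{[\ell]\widetilde p_0}(\gamma_2')=-w_{\widetilde p_0}(\gamma_2')$ by Theorem \ref{thm:wn-lifts}, after which Theorem \ref{thm:main1} again applies. (The same remarks show $W_{\widetilde\gamma_0}(\gamma)$ is independent of the chosen free homotopy: two base-point traces differ by an element of the centralizer of $g_0$, orientation preserving except possibly in case (W3), where one uses absolute values.)

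For (1)$\Rightarrow$(2) it suffices to show that a regular homotopy preserves the based winding number once the lift of the base point is carried along; this yields $W_{\widetilde\gamma_0}(\gamma_1)=W_{\widetilde\gamma_0}(\gamma_2)$ after the normalization of the previous paragraph, the only possible discrepancy being a sign $\varepsilon([\ell])$ with $[\ell]$ in the centralizer of $g_0$, which is $+1$ in cases (W1), (W4), irrelevant modulo $2$ in (W2), and absorbed by the absolute value in (W3). To prove the claim, use the covering homotopy property of the map sending a regular closed curve to the pair (base point, unit tangent at the base point), which is a fibration over the unit tangent bundle of $M$: a regular homotopy $\gamma_1\simeq\gamma_2$ projects to a path there, homotopic rel endpoints to one of the form ``rotate the direction at the start, follow the tangent lift of a smooth immersed path $\sigma$, rotate the direction at the end''. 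Covering this, $\gamma_2$ equals --- up to a regular homotopy fixing base point and direction, which preserves $i_{\widetilde\gamma}$ and hence $w_{\widetilde p}$ by Theorem \ref{thm:main0} --- the result of rotating $\gamma_1$, dragging along $\sigma$, and rotating again; rotations preserve $w_{\widetilde p}$ by Theorem \ref{thm:rotation}, and the dragging step is controlled by the dragging lemma, so $w$ is preserved with the lift transported along $\widetilde\sigma$, as required.

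The main obstacle is the dragging lemma, specifically the behaviour of the geodesic and external-angle correction terms. That $i_{\widetilde\gamma}$ (respectively $j_{\widetilde\gamma}$ modulo $2$) changes by a computable amount under dragging is a direct angle-function calculation: writing the lift of $\beta^{\#}\gamma$ as a concatenation of $\overline{\widetilde\beta}$, $\widetilde\gamma$, and a deck translate of $\widetilde\beta$, the contributions of $\overline{\widetilde\beta}$ and $\widetilde\beta$ cancel because reversing a curve negates its $i$-index, and the exterior-angle corrections introduced at the smoothing corners cancel in pairs by the same reversal symmetry. The delicate part is that the shortest geodesic representing the conjugated class $[\beta]^{-1}[\gamma][\beta]$ is not the drag of the shortest geodesic representing $[\gamma]$, so one must check separately that the change in $i_{\widetilde\delta}+\chi/2\pi$ exactly matches the change in $i_{\widetilde\gamma}$ (and similarly with $j$); I expect this to follow by comparing both the curve and the geodesic to the same dragged configuration and using that, by \S2--\S3, $w_{\widetilde q}$ depends only on the $\pi_1$-class and the based regular homotopy class. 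Carrying this through --- with honest bookkeeping of the external angles at the corners and, in the non-orientable case, of the two lifts of the base point --- is the real work, and the orientation-reversing ($j$-index) case is hardest since the $j$-index is not additive under concatenation.
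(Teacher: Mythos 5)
Your overall skeleton is the same as the paper's: move both curves to a common base point by finger moves (your ``dragging''), use the reversibility of $\gamma_0$ to correct a possible sign discrepancy in case (W3) by a finger move along an orientation-reversing loop whose class commutes with $[\gamma_0]$ (with the sign change supplied by \ref{thm:wn-lifts}), and then invoke Theorem \ref{thm:main1}; the well-definedness remark via the centralizer in case (W4) is likewise the paper's Proposition \ref{thm:well-def}. The genuine gap is that the entire argument rests on your ``dragging lemma'' --- invariance of $w_{\widetilde q}$ under a regular homotopy once the lift of the base point is transported along the lifted trace --- and you do not prove it: you yourself flag that matching the change of the correction term $i_{\widetilde\delta}+\chi_{\delta,\widetilde p}/2\pi$ (resp.\ the $j$-version) against the change of $i_{\widetilde\gamma}$ is ``the real work,'' and your proposed route (concatenation bookkeeping plus a direct comparison of the shortest geodesics representing $[\gamma]$ and its conjugate $[\beta]^{-1}[\gamma][\beta]$) is exactly where such a computation gets stuck, since those two geodesics are not related by the drag. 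As it stands the central step is an expectation, not a proof.

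The gap is, however, precisely Proposition \ref{thm:wn-reg-h} of the paper (together with Example \ref{ex:finger}), and the paper's proof of it avoids your difficulty entirely: along the lifted regular homotopy the endpoints $\widetilde\gamma_t(a)$, $\widetilde\gamma_t(b)$ move continuously, the unique shortest geodesic $\widetilde\delta_t$ joining them depends continuously on them by tameness (\ref{def:ces}), and the external angle $\chi$ stays in the open interval $(-\pi,\pi)$ (the internal angle is never $0$, by \ref{def:ext-a} and the preceding discussion), so $i_{\widetilde\gamma_t}-i_{\widetilde\delta_t}-\chi_t/2\pi$ (resp.\ its $j$-analogue mod $2$) varies continuously with values in the discrete set $\bbZ$ (resp.\ $\bbZ/2\bbZ$) and is therefore constant; no comparison of geodesics for conjugate classes is needed. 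The same continuity-plus-discreteness argument, applied to the lifted regular homotopy itself, also makes your (1)$\Rightarrow$(2) direction immediate, so the fibration/covering-homotopy factorization of an arbitrary regular homotopy into rotations and a drag --- which you assert without justification --- is unnecessary. With the dragging lemma replaced by (or proved as in) \ref{thm:wn-reg-h}, the rest of your argument goes through and coincides with the paper's proof.
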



\bigbreak
\section{Regular curves on the plane}

A `curve on the plane' means a parametrized curve $\gamma:[a,b]\to\bbE^2$ in this article.

\begin{definition} \label{def:regular} A $C^1$-curve $\gamma:[a,b]\to \bbE^2$ on the plane is said to be {\it regular}
if the derivative $v_\gamma(u)=\frac{d\gamma}{du}(u)$ is non-zero for every $u\in[a,b]$.
The derivative is also called the {\it velocity} of $\gamma$.
The unit vector $e_\gamma(u)=v_\gamma(u)/|v_\gamma(u)|$ is called the {\it direction} of $\gamma$.
The unit vectors $e_\gamma(a)$ and $e_\gamma(b)$ are called the {\it initial direction} and 
the {\it terminal direction} of $\gamma$, respectively.  They are also called the {\it end directions}.
\end{definition}

If $\gamma:[a,b]\to \bbE^2$ is a regular curve and $\rho:[a,b]\to [c,d]$ is an
orientation-preserving diffeomorphism ({\it i.e.} a differentiable homeomorphism
such that $\rho'(u)>0$ for all $u\in [a,b]$), then $\gamma \circ \rho^{-1}:[c,d]\to\bbE^2$
is also a regular curve with the same image.  This is called a {\it reparametrization}
of a regular curve. We identify $\gamma$ and $\gamma\circ \rho^{-1}$.  In fact,
it is customary to write
\[
(\gamma\circ\rho^{-1})(t)=\gamma(t)\qquad (c\leq t\leq d).
\]
For example, let $s$ be the {\it arc length parameter} of a regular curve 
$\gamma:[a,b]\to\bbE^2$ of length $l$; {\it i.e.}
\[
s=s(u)=\int_a^u |v_\gamma(t)|\, dt~\quad (a\leq u\leq b).
\]
Then $ds/du=|v_\gamma(u)|$ is always positive, and $s=s(u)$ defines a
diffeomorphism $s:[a,b]\to[0,l]$ with inverse $u=u(s)$. 
As I mentioned above, $\gamma(u(s))$ is denoted $\gamma(s)$, and 
$v_\gamma(s)=\frac{d\gamma}{ds}$ is a unit vector.
Here $v_\gamma(s)$ and $\frac{d\gamma}{ds}$ actually mean $v_{\gamma\circ u}(s)$
and $\frac{d\gamma}{du}\frac{du}{ds}$, respectively.

\begin{definition} \label{def:angle}
A continuous function $\theta:[a,b]\to \bbR$ is an {\it angle function} for a regular curve $\gamma$
if it is a lift of the direction map $e_\gamma:[a,b]\to S^1=\bbR/2\pi\bbZ$
with respect to the projection $\bbR\to \bbR/2\pi\bbZ$, 
or equivalently if
$e_\gamma(u)=(\cos\theta(u), \sin\theta(u))$ for each $u\in[a,b]$.  
Its value $\theta(t)$ is called the {\it angle} for the direction $e_\gamma(t)$.
\end{definition}

\begin{definition} \label{def:ij}
The {\it $i$-index} $i_\gamma$ and the {\it $j$-index} $j_\gamma$
of a regular curve $\gamma:[a,b]\to \bbE^2$ are defined as follows:
\[
i_\gamma=\frac{\theta(b)-\theta(a)}{2\pi}~(\in\bbR) \quad\text{and}\quad
j_\gamma=\frac{\theta(b)+\theta(a)}{2\pi}~(\in\bbR),
\]
where $\theta$ is any angle function for $\gamma$.
\end{definition}

The $i$-index $i_\gamma$ is obviously independent of the choice of the angle function $\theta$;
on the other hand, 
a different choice of $\theta$ changes $j_\gamma$ by an even integer; thus,
the $j$-index $j_\gamma$ is only well-defined modulo $2$.
The $i$-index is an obvious extension of the classical winding numbers of regular
closed curves on the plane to possibly non-closed regular curves.
The following is obvious.

\begin{proposition} \label{thm:ij-ab}
If $\alpha$ and $\beta$ are the angles for the directions
$e_\gamma(a)$ and $e_\gamma(b)$, respectively, then
\[
i_\gamma\in \frac{\beta-\alpha}{2\pi}+\bbZ \quad\text{and} \quad 
j_\gamma\in \frac{\beta+\alpha}{2\pi}+\bbZ.
\]
In particular, the $i$-index $i_\gamma$ is an integer if $e_\gamma(a)=e_\gamma(b)$, 
and the $j$-index $j_\gamma$ is an integer if $e_\gamma(a)$ and $e_\gamma(b)$
are reflections of each other with respect to a horizontal line.
\end{proposition}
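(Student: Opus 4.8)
The plan is to work directly from Definition~\ref{def:ij}: fix any angle function $\theta$ for $\gamma$ and compare its endpoint values with the prescribed angles $\alpha$ and $\beta$.

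First I would note that, since $\theta$ is by definition a lift of the direction map $e_\gamma$ along the projection $\bbR\to\bbR/2\pi\bbZ$, the number $\theta(a)$ is itself an angle for $e_\gamma(a)$, and any two angles for one and the same unit vector differ by an integer multiple of $2\pi$; likewise for $\theta(b)$. Hence there are integers $m,n$ with $\theta(a)=\alpha+2\pi m$ and $\theta(b)=\beta+2\pi n$. Substituting these into the definitions of the two indices yields
\[
i_\gamma=\frac{\theta(b)-\theta(a)}{2\pi}=\frac{\beta-\alpha}{2\pi}+(n-m),\qquad
j_\gamma=\frac{\theta(b)+\theta(a)}{2\pi}=\frac{\beta+\alpha}{2\pi}+(n+m),
\]
which are precisely the asserted membership relations, with $n-m$ and $n+m$ the witnessing integers. (Note that for $i_\gamma$ this is consistent with its independence of $\theta$, while for $j_\gamma$ the ambiguity of $\theta$ is exactly what makes it only well-defined mod $2$, as already remarked.)

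For the two special cases I would then specialize the choice of $\alpha,\beta$. If $e_\gamma(a)=e_\gamma(b)$, then $\alpha$ and $\beta$ are angles for the same direction, so $\beta-\alpha\in 2\pi\bbZ$, and the first relation above forces $i_\gamma\in\bbZ$. If instead $e_\gamma(a)$ and $e_\gamma(b)$ are reflections of one another across a horizontal line, write $e_\gamma(a)=(\cos\alpha,\sin\alpha)$; its reflection is $(\cos\alpha,-\sin\alpha)=(\cos(-\alpha),\sin(-\alpha))$, so $-\alpha$ is an angle for $e_\gamma(b)$, i.e.\ we may take $\beta=-\alpha$, whence $\beta+\alpha\in 2\pi\bbZ$ and the second relation forces $j_\gamma\in\bbZ$.

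Since the whole argument is an unwinding of definitions together with the elementary fact that the angles representing a fixed unit vector form a single coset of $2\pi\bbZ$ in $\bbR$, there is essentially no obstacle; the only point that warrants a moment's care is the bookkeeping observation that reflection in a horizontal line negates the angle, which is immediate from the identity $(\cos\alpha,-\sin\alpha)=(\cos(-\alpha),\sin(-\alpha))$.
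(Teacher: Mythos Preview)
Your proof is correct and is exactly the direct unwinding of definitions the paper has in mind; indeed the paper simply declares the proposition ``obvious'' and gives no argument at all. There is nothing to add.
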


\begin{definition} \label{def:reg-h}
A homotopy $\{\sigma_t:[a,b]\to\bbE^2\}_{0\leq t \leq 1}$ between two $C^1$-curves
$\gamma_j:[a,b]\to\bbE^2$ ($j=1,2$) is said to be {\it regular} if
\begin{enumerate}
\item $\sigma_t$ is a regular curve for each $t$ ($0\leq t \leq 1$), and
\item the velocity $v_{\sigma_t}(u)$ is continuous with respect to $t$.
\end{enumerate}
\end{definition}

\begin{proposition} \label{thm:conti}
If $\{\sigma_t\}$ is a regular homotopy,
$i_{\sigma_t}(\in\bbR)$ and $j_{\sigma_t}+2\bbZ(\in \bbR/2\bbZ)$ are continuous functions of $t$.
\end{proposition}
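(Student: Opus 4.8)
The plan is to reduce the continuity of $i_{\sigma_t}$ and of $j_{\sigma_t}\bmod 2$ to the continuity of a suitably chosen family of angle functions $\theta_t$ for $\sigma_t$. The key point is that we can pick $\theta_t(u)$ to depend continuously on the pair $(t,u)$. To do this, first I would form the direction map $e:[a,b]\times[0,1]\to S^1$, $e(u,t)=e_{\sigma_t}(u)$, which is continuous by hypothesis (2) of \ref{def:reg-h} together with the fact that $v_{\sigma_t}(u)$ is nowhere zero, so normalization is continuous. Since the square $[a,b]\times[0,1]$ is simply connected, the covering map $\bbR\to S^1=\bbR/2\pi\bbZ$ admits a continuous lift $\Theta:[a,b]\times[0,1]\to\bbR$ of $e$; fixing the value $\Theta(a,0)$ pins it down uniquely. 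For each fixed $t$, the slice $u\mapsto\Theta(u,t)$ is then an angle function for $\sigma_t$ in the sense of \ref{def:angle}.

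Next I would simply read off the indices from this continuous lift. By definition (with the choice of angle function $\theta_t=\Theta(\cdot,t)$),
\[
i_{\sigma_t}=\frac{\Theta(b,t)-\Theta(a,t)}{2\pi},\qquad
j_{\sigma_t}=\frac{\Theta(b,t)+\Theta(a,t)}{2\pi}.
\]
Since $\Theta$ is continuous on the compact square, the functions $t\mapsto\Theta(b,t)$ and $t\mapsto\Theta(a,t)$ are continuous, hence so are $i_{\sigma_t}$ and this particular representative of $j_{\sigma_t}$. Because $i_{\sigma_t}$ does not depend on the choice of angle function (remark after \ref{def:ij}), this already proves continuity of $i_{\sigma_t}\in\bbR$. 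For the $j$-index, any other choice of angle function changes $j_{\sigma_t}$ by an even integer, so the class $j_{\sigma_t}+2\bbZ\in\bbR/2\bbZ$ equals the continuous representative's class; a continuous $\bbR$-valued function composed with the projection $\bbR\to\bbR/2\bbZ$ is continuous, which gives continuity of $j_{\sigma_t}+2\bbZ$.

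The only genuine step requiring care is the construction of the continuous lift $\Theta$, i.e.\ verifying the hypotheses of the path/homotopy lifting (or monodromy) theorem: one needs $e:[a,b]\times[0,1]\to S^1$ to be continuous, which follows from condition (2) of \ref{def:reg-h} and the non-vanishing of the velocities (condition (1) of \ref{def:regular}, built into regularity of each $\sigma_t$), and one needs the domain to be simply connected, which the square is. Everything after that is formal. I expect the main obstacle — if any — to be purely a matter of bookkeeping: making sure that the slicewise lift $\Theta(\cdot,t)$ is genuinely an angle function for $\sigma_t$ and not merely a lift of something homotopic to $e_{\sigma_t}$, but this is immediate since $\Theta$ lifts $e$ \emph{pointwise}.
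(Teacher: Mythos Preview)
Your argument is correct and is essentially the same as the paper's: the paper's one-line proof ``By choosing a suitable angle function $\varphi_t$ for $\sigma_t$, we obtain a homotopy between $\varphi_0$ and $\varphi_1$'' is exactly the construction of your continuous lift $\Theta$, and the continuity of the indices follows immediately. You have simply spelled out the covering-space justification that the paper leaves implicit.
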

\begin{proof}
By choosing a suitable angle function $\varphi_t$ for $\sigma_t$, we obtain
a homotopy between $\varphi_0$ and $\varphi_1$.
\end{proof}

\begin{proposition} \label{thm:i2j}
Let $\gamma,\delta:[a,b]\to\bbE^2$ be two regular curves such that
$e_\gamma (a)=e_\delta (a)$.
Then $i_\gamma=i_\delta$ implies $j_\gamma\equiv j_\delta\mod 2$.
\end{proposition}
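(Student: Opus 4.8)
The plan is to exploit the freedom in the choice of angle functions: normalize the two curves so that their angle functions agree at the starting point $a$, and then read off the conclusion directly from Definition \ref{def:ij}.

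First I would fix an angle function $\theta_\gamma$ for $\gamma$ and an angle function $\theta_\delta$ for $\delta$ (these exist by the usual lifting argument along $\bbR\to S^1$). Since $e_\gamma(a)=e_\delta(a)$, the numbers $\theta_\gamma(a)$ and $\theta_\delta(a)$ project to the same point of $S^1=\bbR/2\pi\bbZ$, so $\theta_\gamma(a)-\theta_\delta(a)=2\pi k$ for some $k\in\bbZ$. Replacing $\theta_\delta$ by $\theta_\delta+2\pi k$, which is again a valid angle function for $\delta$, I may assume $\theta_\gamma(a)=\theta_\delta(a)$.

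Next I would invoke the hypothesis $i_\gamma=i_\delta$. By Definition \ref{def:ij} this means $\theta_\gamma(b)-\theta_\gamma(a)=\theta_\delta(b)-\theta_\delta(a)$, and together with the normalization $\theta_\gamma(a)=\theta_\delta(a)$ this forces $\theta_\gamma(b)=\theta_\delta(b)$ as well. Adding the two equalities at $a$ and $b$ gives $\theta_\gamma(b)+\theta_\gamma(a)=\theta_\delta(b)+\theta_\delta(a)$, i.e.\ $j_\gamma=j_\delta$ for these particular angle functions.

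Finally I would note that, for a fixed regular curve, any two angle functions differ by a constant in $2\pi\bbZ$, so passing from one angle function to another changes the value of $\tfrac{\theta(b)+\theta(a)}{2\pi}$ by an even integer. Hence the equality $j_\gamma=j_\delta$ obtained above for the normalized angle functions descends to the choice-independent congruence $j_\gamma\equiv j_\delta\pmod 2$, which is the assertion. There is no substantial obstacle here; the only point needing care is this last bookkeeping about how the $j$-index depends on the chosen angle function, and making sure the normalization $\theta_\gamma(a)=\theta_\delta(a)$ is legitimate before using it.
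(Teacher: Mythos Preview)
Your argument is correct and follows essentially the same approach as the paper: normalize the angle functions so that $\theta_\gamma(a)=\theta_\delta(a)$, deduce $\theta_\gamma(b)=\theta_\delta(b)$ from $i_\gamma=i_\delta$, and conclude $j_\gamma\equiv j_\delta\pmod 2$. Your version is somewhat more explicit about why the normalization is legitimate and why the conclusion is independent of the chosen angle functions, but the underlying idea is identical.
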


\begin{proof}
Since the $j$ index modulo 2 is independent of the choice of the angle function,
we may assume that the values of the angle functions 
$\theta_\gamma$ and $\theta_\delta$ of $\gamma$ and $\delta$ 
at $u=a$ are the same. 
Then $i_\gamma=i_\delta$ implies
$\theta_\gamma(b)=\theta_\delta(b)$;
therefore we have $j_\gamma\equiv j_\delta$ mod 2.
\end{proof}

The following is our main technical tool; it is a special case of a statement at the end of \S7
of \cite{SS58}.
It is also immediate from the Smale-Hirsch immersion theory (\cite{SS59}, \cite{MWH}).
Since this special case can be proved in an elementary way,
we will give a proof to make this paper more self-contained.

\begin{lemma} \label{thm:key}
Let $p$, $q$ be points in $\bbE^2$, and $\alpha$, 
$\beta\in\bbR$ be two angles.
Assume that two regular curves $\gamma_1$, $\gamma_2:[a,b]\to\bbE^2$ and their directions
$e_{\gamma_1}$, $e_{\gamma_2}$ satisfy the following boundary conditions:
\[
\gamma_j(a)=p, \quad \gamma_j(b)=q, \quad e_{\gamma_j}(a)=
(\cos\alpha,\sin\alpha) 
\quad e_{\gamma_j}(b)=(\cos\beta,\sin\beta)\qquad
(j=1,2),
\]
and let $\theta_j:[a,b]\to \bbR$ be angle functions for $\gamma_j$ $(j=1,2)$.
There exists a regular homotopy between $\gamma_1$ and $\gamma_2$ that fixes the end points
and the end directions if and only if $i_{\gamma_1} = i_{\gamma_2}$~.
\end{lemma}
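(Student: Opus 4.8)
The plan is to establish the two implications of the \emph{if and only if} separately: the forward one by a continuity argument, and the converse one by a Whitney--Graustein style construction.

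\emph{Forward direction.} Suppose $\{\sigma_t\}_{0\le t\le 1}$ is a regular homotopy from $\gamma_1$ to $\gamma_2$ fixing the end points and the end directions. By \ref{thm:conti} the function $t\mapsto i_{\sigma_t}$ is continuous on $[0,1]$, while by \ref{thm:ij-ab}, since every $\sigma_t$ has initial direction $(\cos\alpha,\sin\alpha)$ and terminal direction $(\cos\beta,\sin\beta)$, its value lies in the discrete set $\frac{\beta-\alpha}{2\pi}+\bbZ$. A continuous map into a discrete set is constant, so $i_{\gamma_1}=i_{\sigma_0}=i_{\sigma_1}=i_{\gamma_2}$.

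\emph{Converse, setup.} Now assume $i_{\gamma_1}=i_{\gamma_2}$. Reparametrizations are regular homotopies fixing all the end data (the orientation-preserving diffeomorphisms of $[a,b]$ fixing the end points form a convex set), so I may normalize parametrizations freely. Write each regular curve $\sigma:[a,b]\to\bbE^2$ as $\sigma(u)=p+\int_a^u r(t)\,E_{\theta}(t)\,dt$, where $r>0$ is its speed, $\theta$ an angle function, and $E_\theta(u)=(\cos\theta(u),\sin\theta(u))$; then a regular homotopy fixing the end data is exactly a continuous path $s\mapsto(r_s,\theta_s)$ of such pairs that keeps $\theta_s(a),\theta_s(b)$ fixed and maintains the \emph{closing condition} $\int_a^b r_s(t)E_{\theta_s}(t)\,dt=q-p$. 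Replacing the angle function of $\gamma_2$ by $\theta_2+2\pi k$ for a suitable $k$ makes $\theta_1(a)=\theta_2(a)$, whereupon $i_{\gamma_1}=i_{\gamma_2}$ forces $\theta_1(b)=\theta_2(b)$; so the straight-line interpolation $\theta_s=(1-s)\theta_1+s\theta_2$ has the right fixed boundary values, hence preserves the end directions.

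\emph{Converse, the construction.} What remains is to accompany this path of angle functions by positive speeds $r_s$ satisfying the closing condition and varying continuously; in general $q-p$ need not lie in the cone $\{\int_a^b r\,E_{\theta_s}:r>0\}$ of displacements reachable by positive densities, and this solvability-with-positivity is the one real difficulty. I would remove it by a preliminary normalization: homotope each $\gamma_j$, rel all end data and without changing $i_{\gamma_j}$, to a curve that on an initial subinterval $[a,c]$ is a fixed small figure-eight at $p$ (a counterclockwise loop followed by a clockwise loop, both tangent to $(\cos\alpha,\sin\alpha)$, with net rotation zero), using that a pair of opposite kinks can be cancelled by an elementary local move. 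After this, $\theta_s$ agrees on $[a,c]$ with one fixed explicit function $\psi$ for every $s$, $E_\psi$ ranges over all of $S^1$, so $M_\psi:=\int_a^c E_\psi(u)E_\psi(u)^{\mathrm T}\,du$ is positive definite and, with $\psi$ chosen symmetric, $\int_a^c E_\psi=0$. One then takes $r_s$ on $[c,b]$ to be the linear interpolation of the two given speeds there, and on $[a,c]$ the averaging ansatz $r_s(u)=c_0+\langle \vec b_s,E_\psi(u)\rangle$ (tapered to zero near $u=c$) with $\vec b_s:=M_\psi^{-1}\vec t_s$, where $\vec t_s:=q-p-\int_c^b r_s E_{\theta_s}$ is the displacement $[a,c]$ still has to supply; since $\{\vec t_s:0\le s\le 1\}$ is compact, one large $c_0$ keeps $r_s>0$ for all $s$. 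Then $\sigma_s(u)=p+\int_a^u r_s E_{\theta_s}$ is a regular homotopy fixing the end data between the two normalized curves; any residual mismatch of speed functions at $s=0,1$ is absorbed by homotopies with the angle function frozen (the set $\{r>0:\int_a^b r\,E_{\theta_j}=q-p\}$ is convex), and the junctions are smoothed to stay within $C^1$. I expect the positivity of $r_s$ along the interpolation of angle functions to be the only genuine obstacle, which is precisely what the inserted figure-eight is there to overcome.
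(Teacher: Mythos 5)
Your ``only if'' argument is the paper's own: continuity of $i_{\sigma_t}$ (\ref{thm:conti}) together with the discreteness of $\frac{\beta-\alpha}{2\pi}+\bbZ$ (\ref{thm:ij-ab}). For the converse your architecture differs genuinely from the paper's, and its analytic core is sound. The paper squeezes each curve so that all of its turning happens on an initial piece of arc length $0.1$ followed by a long straight tail, interpolates the angle functions, and then kills the terminal-point defect $\alpha_t^\varepsilon$, which is \emph{small} ($|\alpha_t^\varepsilon|\leq 0.2<1$), by subtracting $\psi(s)\alpha_t^\varepsilon$ from the unit velocity: smallness is what keeps the velocity nonzero. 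You instead prepend a universal initial piece on which the direction sweeps out all of $S^1$, interpolate the angle functions, and absorb a defect of \emph{arbitrary} size by modulating the scalar speed on that piece through the positive-definite matrix $\int E_\psi E_\psi^{\mathrm T}\,du$, with a large constant $c_0$ (uniform in $s$ by compactness) guaranteeing positivity; your convexity remark (interpolating speeds with the angle function frozen) correctly handles the parametrization mismatch at $s=0,1$. The remaining loose ends you gesture at are indeed reparable: the taper replaces $M_\psi$ by $\int\rho\,E_\psi E_\psi^{\mathrm T}\,du$, still positive definite; $\int_a^c E_\psi\,du=0$ is automatic if the model eight is parametrized at unit speed, since it returns to $p$; and the speed must be blended, not cut, at $u=c$ so that each $\sigma_s$ is honestly $C^1$.

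The genuine gap is the normalization step itself: the claim that each $\gamma_j$ can be regularly homotoped, rel end points and end directions, so that its initial segment becomes a prescribed figure-eight, justified only by ``a pair of opposite kinks can be cancelled by an elementary local move.'' That statement is precisely a special case of Lemma \ref{thm:key}: the original arc and the arc with the eight inserted have the same end data and the same $i$-index (the eight contributes net turning zero), so quoting kink-pair creation/cancellation as known is circular unless you supply an independent explicit construction. Such a construction exists and is classical, but it is not a one-line move --- shrinking either loop alone is obstructed by its $\pm 1$ of turning, so the cancellation must slide one loop through the other, and writing this down carefully costs roughly as much as the paper's squeezing argument, which is engineered exactly so that no such local move is ever needed (the only nontrivial input there is the elementary estimate $|\alpha_t^\varepsilon|\leq 0.2$). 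So either add an explicit regular homotopy creating the cancelling pair of kinks at the start of the curve, or replace the figure-eight normalization by one achievable by elementary means; as written, the proposal's key enabling step is unproved and cannot be imported from the lemma it is meant to prove.
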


\begin{proof}
The `only if' part is easy.  Let $\sigma_t$ be a regular homotopy between $\gamma_1$ and $\gamma_2$
satisfying the required conditions.
Then $i_{\sigma_t}$'s  are elements of the discrete subset
$(\beta-\alpha)/2\pi+\bbZ$ of $\bbR$.
Therefore, by \ref{thm:conti}, $i_{\sigma_t}$ is a constant function of $t$.

For the `if' part, we first note that we may assume that $p$ and $q$ are distinct points.
If $p=q$, then we can regularly homotope the curves so that they are identical near $a$ and $b$;
the restrictions of the curves to some closed interval $[a+\delta_1, b-\delta_2]$ for 
some small $\delta_1, \delta_2>0$ are non-closed curves.
Next, by using an isotopy of the plane, we may assume that the following conditions are satisfied:
\[
p=(0,0), \quad q=(d,0)=(10,0), \quad\text{and}\quad
\alpha=\beta=0~.
\]
We  may also assume that the curves $\gamma_1$ and $\gamma_2$ have
the same length $l$, because we can use a finger move to make a curve longer.
A finger move is a regular homotopy and
does not change the $i$-index of the curve.
Then reparametrize the curves by the arc length parameter $s$ ($0\leq s\leq l$).

Now we {\it squeeze} (or {\it shorten}) the curves in the direction of the origin by regular homotopies, and add a straight
tail at the terminal point.
Let us temporarily drop the subscript $j$ and let $\gamma$ represent both of $\gamma_j$'s ($j=1,2$).
For every $t$ satisfying $0< t\leq 1$, we define a curve $\tau_t$ as follows:
the first part of the curve is $t\gamma$ of length $tl$,
which starts from 0 and ends at $t(d,0)$, 
and the second part is a straight horizontal line segment of length $(1-t)d$
which starts at $t(d,0)$ and ends at $(d,0)$ (Figure 1).
The length of $\tau_t$ is $tl+(1-t)d$, so we parametrize $\tau_t$ by
$s$ ($0\leq s\leq l$) so that it has the constant speed $\dfrac{tl+(1-t)d}{l}$.

\begin{figure}[htbp]
  \begin{center}
   \includegraphics[width=140mm]{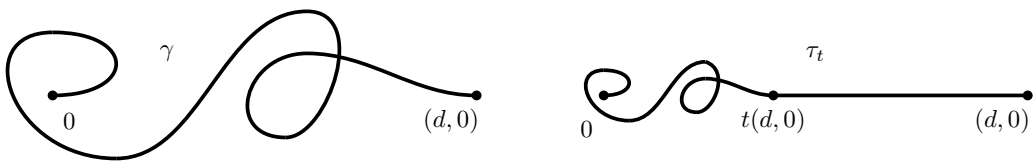}
  \end{center}
\caption{Squeezing}\label{fig:squeeze}
\end{figure}

Let us define a (small) positive number $\varepsilon$ by
$\varepsilon = {0.1}/{l}$. 
Then $\tau_t$ ($\varepsilon\leq t \leq 1$) can be thought of as a regular homotopy
between $\tau_\varepsilon$ and $\gamma$, after a reparametrization.
Throughout the regular homotopy, the end points and the end directions are fixed.  
Let us denote the curve $\tau_\varepsilon$ for $\gamma=\gamma_j$ by $\gamma_j^\varepsilon$
($j=1,2$).
They both have the same length
\[
l_\varepsilon=\varepsilon l+(1-\varepsilon)d = d+\varepsilon(l-d)~.
\]
We reparametrize the curves by the arc length parameter $s$ ($0\leq s\leq l_\varepsilon$).
Now the speed is 1, and the time it takes a point to move along the curves from the origin
to $\varepsilon(d,0)$ is $\varepsilon l=0.1$.
Here is a list of relevant numbers introduced now:
\[
0<\varepsilon l=0.1 \ll d=10 < l_\varepsilon<10.1\ll l~.
\]
The curves $\gamma_1^\varepsilon$ and $\gamma_2^\varepsilon$ have the same $i$-index $m$.
Let $\theta_1$ and $\theta_2$ be the angle functions of $\gamma_1^\varepsilon$ and 
$\gamma_2^\varepsilon$
such that 
\[
\theta_j(0)=0, \qquad \theta_j(l_\varepsilon)=2m\pi\qquad (j=1,2)~.
\]
Note that we actually have
\[
\theta_j(s)=2m\pi\qquad (0.1\leq s\leq l_\varepsilon, ~~j=1,2)~.
\]

Define a homotopy $\varphi_t$ ($0\leq t\leq 1$) between $\theta_1$ and $\theta_2$ by
\[
\varphi_t(s)=(1-t)\theta_1(s)+t\theta_2(s)~,
\]
and define a vector function $v_t(s)$ by:
\[
v_t(s)=(\cos\varphi_t(s), \sin\varphi_t(s)) \qquad  (0\leq t\leq 1, ~~0\leq s\leq l_\varepsilon).
\]
Note that, for every $t$, we have
\[
\varphi_t(0)=0, \quad \varphi_t(s)=2m\pi, \quad v_t(0)=v_t(s)=(1,0)
\qquad (0.1 \leq s\leq l_\varepsilon)~.
\]
For any $t$, the integral
\[
\int_0^s v_t(u)du\qquad (0\leq s\leq l_\varepsilon)
\]
defines a regular curve which starts from the origin with velocity $(1,0)$
and reaches its terminal point $\int_0^{l_\varepsilon} v_t(u)du$ with velocity $(1,0)$.
These determine a regular homotopy between $\gamma_1^\varepsilon$ and $\gamma_2^\varepsilon$, 
but unfortunately the terminal point may not be fixed.
Let us denote the difference of the terminal point and $(d,0)$ by $\alpha_t^\varepsilon$:
\begin{align*}
\alpha_t^\varepsilon &=\int_0^{l_\varepsilon} v_t(u)du - (d,0)
=\int_0^{0.1} v_t(u)du + \int_{0.1}^{l_\varepsilon} v_t(u)du -(d,0)\\
&=\int_0^{0.1} v_t(u)du +(l_\varepsilon-0.1,0)-(d,0)
=\int_0^{0.1} v_t(u)du +(l_\varepsilon-d-0.1,0)~.
\end{align*}
We can estimate the length of this vector as follows:
\[
|\alpha_t^\varepsilon|\leq \int_0^{0.1} |v_t(u)| du+ (0.1-(l_\varepsilon -d))
\leq 0.1+0.1=0.2~.
\] 
Recall that we are assuming that $d=10$ and note that $0.1<7<9<10=d<l_\varepsilon$.
Let $\psi:[0,l_\varepsilon] \to\bbR$ be a smooth function satisfying the following conditions:
\[
0\leq \psi(s)\leq 1~~(0\leq s\leq l_\varepsilon),\quad
\psi(s)=0~~( 0\leq s \leq 7, ~9\leq s\leq l_\varepsilon),\quad\text{and} 
\quad \int_0^{l_\varepsilon} \psi(s)ds =1~.
\]
We modify $v_t$ and define a new function $\widetilde v_t$ as follows:
\[
\widetilde v_t(s)=v_t(s)-\psi(s)\alpha_t^\varepsilon.
\]
Note that it is never 0 because $|v_t(s)|=1$ and 
$|\psi(s)\alpha_t^\varepsilon|\leq 0.2$.
Also note that $\widetilde v_t(s)=v_t(s)$ for $0\leq s\leq 7$ and
$9\leq s\leq l_\varepsilon$.  In particular, we have
\[
\widetilde v_t(0)=\widetilde v_t(l_\varepsilon)=(1,0)~.
\]

Now define a homotopy $\sigma_t$ between $\gamma_1^\varepsilon$ and $\gamma_2^\varepsilon$ by
\[
\sigma_t(s) = \int_0^s \widetilde v_t(u)du 
= \int_0^s \left( v_t(u) -\psi(u)\alpha_t^\varepsilon \right)du
= \int_0^s v_t(u)du -\left(\int_0^s \psi(u) du~\right)\alpha_t^\varepsilon~.
\]
The terminal point of the curve $\sigma_t$ can be calculated as follows:
\[
\sigma_t(l_\varepsilon) 
= \int_0^{l_\varepsilon} v_t(u)du - \left(\int_0^{l_\varepsilon} \psi(u)
du~\right)\alpha_t^\varepsilon
= \int_0^{l_\varepsilon} v_t(u)du - \alpha_t^\varepsilon
=(d,0)~.
\]
Thus $\sigma_t$ is a regular homotopy between $\gamma_1^\varepsilon$ and
$\gamma_2^\varepsilon$ which fixes the end points and the end directions.
\end{proof}


\bigbreak
\section{Regular homotopy of closed curves fixing the base point}

In this section we consider regular closed curves on a smooth surface $M$
with a good euclidean or hyperbolic geometry,
and study the regular homotopy classification of regular closed curves fixing the base point. We use the based winding number, denoted  $w_{\widetilde p}(\gamma)$,
which was already briefly described in the introduction.

We always assume that the surface $M$  has no boundary.
If a surface has a non-empty boundary and 
if a curve or a homotopy touches the boundary, we can always push it into the interior
of the surface using the collar structure near the boundary.
So the regular homotopy classification on the surface and 
the regular homotopy classification on its interior  are the same as long as 
the base point is in the interior.

We denote the universal cover of $M$ by $\widetilde M$ and denote the projection from
$\widetilde M$ to $M$ by $p_M$.
We equip $\widetilde M$ with the riemannian metric induced from that of $M$.

\begin{definition} \label{def:ces}
A {\it conformally euclidean structure} on a riemannian surface $M$ is a conformal embedding
of $\widetilde M$ into the euclidean plane $\bbE^2$.
We regard $\widetilde M$ as a subset of $\bbE^2$ using the embedding.
A conformally euclidean structure is said to be {\it tame} if the following conditions hold:
(1) any two points of $\widetilde M$ can be joined by a unique shortest geodesic
in $\widetilde M$, and 
(2) for any compact subset $K$ of the interior of $\widetilde M$, there is a regular map
$f:\bbE^2\to \widetilde M$ such that the restriction of $f$ to $K$ is the identity map.
\end{definition}

\begin{examples} \label{ex:ces}
The following riemannian surfaces have a tame conformally euclidean structure:
\begin{enumerate}
\item any complete euclidean surface $M$ ($\widetilde M= \bbE^2$),
\item any complete hyperbolic surface $M$
($\widetilde M= \bbH^2 \subset \bbE^2$), 
\end{enumerate}
where $\bbH^2$ denotes either the Poincar\'e disc model or the upper half plane model
of the hyperbolic plane.
\end{examples}

Let $p$ be a point of a surface $M$ with a conformally euclidean structure
$\widetilde M\subset \bbE^2$, 
and fix a preferred lift $\widetilde p\in\widetilde M$ of $p$.
Let $\xi:[a,b]\to M$ be a closed curve based at $p$, and let $\widetilde\xi$
be a lift of $\xi$ such that $\widetilde\xi(a)=\widetilde p$.
This determines a deck transformation $T_{\widetilde\xi}:\widetilde M\to\widetilde M$
which sends $\widetilde p=\widetilde\xi(a)$ to $\widetilde\xi(b)$.

\begin{definition} \label{def:sign}
A closed curve $\xi:[a,b]\to M$ is said to be {\it orientation preserving} or
{\it orientation reversing} according as $T_{\widetilde\xi}$ preserves the orientation of
$\widetilde M$ or not.  This does not depend on the choice of the lift $\widetilde\xi$.
Here $\widetilde M$ is given the orientation determined by the standard orientation of $\bbE^2$. 
The {\it sign} $\varepsilon_{\xi}$ of $\xi$ is defined by:
\[
\varepsilon_{\xi}=\begin{cases}
1 & \text{if $\xi$ is orientation preserving,}\\
-1 & \text{if $\xi$ is orientation reversing.}
\end{cases}
\]
The points $\widetilde\xi(a)$ and $\widetilde\xi(b)$ are said to have the
{\it same} (resp. {\it opposite}) {\it orientation} if $\xi$ is orientation 
preserving (resp. reversing), and we write 
$\varepsilon(\widetilde\xi(a), \widetilde\xi(b))=1$ (resp. $-1$).
\end{definition}

Now let $M$ be a surface with a tame conformally euclidean structure $\widetilde M\subset \bbE^2$
and  let us consider a smooth curve $\gamma:[a,b]\to M$ on $M$.  
The velocity vector $(d\gamma)_u(\frac{d}{du})$ will be denoted
$v_\gamma(u)$ as in the plane curve case.  
If $v_\gamma(u)$ is never zero, then we say $\gamma$ is {\it regular}, and
$v_\gamma(u)/|v_\gamma(u)|$ will be denoted $e_\gamma(u)$.

\begin{definition} \label{def:reg-cl}
A curve $\gamma:[a,b]\to M$ is said to be a
{\it regular closed curve} if it is reguar, closed ({\it i.e.} $\gamma(a)=\gamma(b)$), 
and $e_\gamma(a)=e_\gamma(b)$.
The point $\gamma(a)=\gamma(b)$ is called the {\it base point} of $\gamma$,
and $e_\gamma(a)=e_\gamma(b)$ is called the {\it base direction} of $\gamma$.
\end{definition} 

{\it Regular homotopies} of regular curves are defined in the obvious way.
When we consider a regular homotopy $\sigma_t$ ($0\leq t\leq 1$) 
between regular closed curves, we also require that each
$\sigma_t$ is a {\it regular closed curve} in the sense above.

As before, let us choose a point $p\in M$ and fix a point $\widetilde p\in\widetilde M$ over $p$.
The first step of the regular homotopy classification is the following.

\begin{theorem}  \label{thm:main0}
Let $\gamma_1$ and $\gamma_2$ be regular closed curves on a surface $M$
with a tame conformally euclidean structure $\widetilde M\subset \bbE^2$.
Suppose they have the same base point $p$ and the same base direction.
Let $\widetilde\gamma_1$ and $\widetilde\gamma_2$ be the lifts of $\gamma_1$ and $\gamma_2$ 
with initial point $\widetilde p$.
Then the following are equivalent.
\begin{itemize}
\item[(1)] $\gamma_1$ and $\gamma_2$ are regularly homotopic fixing the base point and the base direction.
\item[(2)] $[\gamma_1]=[\gamma_2]\in\pi_1(M,p)$ and
$i_{\widetilde \gamma_1}=i_{\widetilde\gamma_2}$.
\end{itemize}
\end{theorem}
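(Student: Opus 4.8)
The plan is to transfer the whole problem to regular curves in $\bbE^2$ and apply the key Lemma \ref{thm:key}, using the covering projection $p_M\colon\widetilde M\to M$ to lift curves and homotopies, and, in the nontrivial direction, using tameness condition (2) of \ref{def:ces} to push a plane regular homotopy back into $\widetilde M$. Throughout one uses that $p_M$ is a local isometry and that the conformal embedding $\widetilde M\subset\bbE^2$ is a diffeomorphism onto its image; hence a curve on $M$ is regular if and only if its lift to $\widetilde M$ is, if and only if the associated plane curve is, and the same holds for regular homotopies (regularity is preserved under composition with (local) diffeomorphisms).

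\emph{Proof of (1) $\Rightarrow$ (2).} A regular homotopy $\{\sigma_t\}$ fixing the base point and the base direction is in particular a based homotopy of loops, so $[\gamma_1]=[\gamma_2]\in\pi_1(M,p)$. Lift it to a homotopy $\{\widetilde\sigma_t\}$ in $\widetilde M$ with $\widetilde\sigma_t(a)=\widetilde p$; this is again a regular homotopy, and composing with the embedding gives a regular homotopy $\{\Sigma_t\}$ of plane curves. Since the homotopy class $[\sigma_t]\in\pi_1(M,p)$ is constant, the associated deck transformation $T_{\widetilde\sigma_t}$ is constant, so the terminal point $\widetilde\sigma_t(b)$ is a fixed point $\widetilde q$; and since the base direction of $\sigma_t$ is fixed, the end directions of $\Sigma_t$ (the images of the unique unit lifts of the base direction at $\widetilde p$ and at $\widetilde q$) are fixed as well. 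Hence by \ref{thm:conti} the index $i_{\Sigma_t}$ is continuous in $t$, while by \ref{thm:ij-ab} it lies in a fixed coset of $\bbZ$; therefore $i_{\Sigma_t}$ is constant, i.e. $i_{\widetilde\gamma_1}=i_{\widetilde\gamma_2}$.

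\emph{Proof of (2) $\Rightarrow$ (1).} Because $[\gamma_1]=[\gamma_2]$, the lifts $\widetilde\gamma_1,\widetilde\gamma_2$ with initial point $\widetilde p$ share the common terminal point $\widetilde q=T_{\widetilde\gamma_1}(\widetilde p)=T_{\widetilde\gamma_2}(\widetilde p)$; and since $\gamma_1,\gamma_2$ have the same base direction, the plane curves $\widetilde\gamma_1,\widetilde\gamma_2$ also have the same initial direction (the lift of the base direction at $\widetilde p$) and the same terminal direction (its lift at $\widetilde q$). Thus they satisfy the boundary hypotheses of \ref{thm:key}, and by assumption $i_{\widetilde\gamma_1}=i_{\widetilde\gamma_2}$, so there is a regular homotopy $\{\Sigma_t\}$ of plane curves from $\widetilde\gamma_1$ to $\widetilde\gamma_2$ fixing the end points and the end directions. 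This homotopy may leave $\widetilde M$. To repair this, set $K=\widetilde\gamma_1([a,b])\cup\widetilde\gamma_2([a,b])$, a compact subset of $\widetilde M$, and use \ref{def:ces}(2) to choose a regular map (local diffeomorphism) $f\colon\bbE^2\to\widetilde M$ with $f|_K=\mathrm{id}$. Then $\{f\circ\Sigma_t\}$ is a regular homotopy lying in $\widetilde M$ from $\widetilde\gamma_1$ to $\widetilde\gamma_2$; it still fixes the end points (which lie in $K$), and it fixes the end directions, since $f$ restricts to the identity along $\widetilde\gamma_1$ and hence $df$ fixes the initial direction at $\widetilde p$ and the terminal direction at $\widetilde q$. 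Projecting by $p_M$ yields a regular homotopy $\{p_M\circ f\circ\Sigma_t\}$ in $M$: each stage is a regular closed curve (its lift runs from $\widetilde p$ to $T_{\widetilde\gamma_1}(\widetilde p)$, so it is closed, with base point $p$ and base direction equal to that of $\gamma_1$), and it connects $\gamma_1$ to $\gamma_2$. This gives (1).

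\emph{The main obstacle.} The only genuinely delicate step is the reinsertion of the plane homotopy coming from \ref{thm:key} back into $\widetilde M$: one must invoke tameness to obtain a folding map $f$ which is the identity on a compact set large enough to contain both $\widetilde\gamma_1$ and $\widetilde\gamma_2$, so that neither the curves nor their boundary data are disturbed. Everything else is routine homotopy-lifting bookkeeping, relying only on the fact that $p_M$, $f$, and the embedding $\widetilde M\hookrightarrow\bbE^2$ are (local) diffeomorphisms, so that regular curves and regular homotopies pull back and push forward to regular curves and regular homotopies.
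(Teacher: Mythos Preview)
Your proof is correct and follows essentially the same approach as the paper's: lift to $\widetilde M\subset\bbE^2$, apply Lemma~\ref{thm:key} to the plane curves, and use tameness condition~(2) to push the resulting plane homotopy back into $\widetilde M$ before projecting down. The only minor difference is that for $(1)\Rightarrow(2)$ you argue via \ref{thm:conti} and discreteness rather than citing the ``only if'' half of \ref{thm:key} directly, and you spell out the use of the retraction~$f$ more carefully (including the observation that $df$ fixes the end directions because $f|_{\widetilde\gamma_j}=\mathrm{id}$), whereas the paper just says ``we can squeeze the regular homotopy into $\widetilde M$ fixing~$\widetilde\gamma_j$'s.''
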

\begin{proof}
(1)$\Rightarrow$(2): Since a regular homotopy is a homotopy, $[\gamma_1]=[\gamma_2]\in\pi_1(M,p)$
is obvious.  Since the regular homotopy lifts to a homotopy of $\widetilde\gamma_1$ and 
$\widetilde\gamma_2$ which fixes the ends and the end directions, 
$i_{\widetilde \gamma_1}=i_{\widetilde\gamma_2}$ follows from \ref{thm:key}.
\par\noindent
(2)$\Rightarrow$(1) 
By the condition $[\gamma_1]=[\gamma_2]\in \pi_1(M,p)$, the terminal points of
$\widetilde\gamma_1$ and $\widetilde\gamma_2$ are the same.
Since $\gamma_j$'s have the same base direction, $\widetilde\gamma_j$'s have the same end directions.
Now we obtain a desired regular homotopy between $\widetilde\gamma_j$'s in $\bbE^2$ by \ref{thm:key}.
By the tameness of the structure, we can squeeze the regular homotopy into $\widetilde M$ fixing
$\widetilde\gamma_j$'s.  
Now project this regular homotopy down to $M$ to get a desired regular homotopy on $M$.
\end{proof}

We next study regular homotopies which fix the base point but not necessarily fix the base direction.
The following is a key fact about $i$-indices and mod 2 $j$-indices.

\begin{proposition} \label{thm:rotation}
Let $\gamma:[a,b]\to M$ be a regular closed curve based at $p$, and $\widetilde\gamma$
be a lift of $\gamma$ whose initial point is a lift $\widetilde p$ of $p$.
\begin{itemize}
\item[(1)] When $\gamma$ is orientation preserving, a local rotation of $\gamma$ around the base point
does not change the $i$-index $i_{\widetilde\gamma}$ $(\in\bbR)$ of $\widetilde\gamma$.
\item[(2)] When $\gamma$ is orientation reversing, a local rotation of $\gamma$ around the base point
does not change the mod 2 $j$-index $j_{\widetilde\gamma}+2\bbZ$ $(\in \bbR/2\bbZ)$ 
of $\widetilde\gamma$.
\end{itemize}
\end{proposition}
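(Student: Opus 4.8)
The plan is to express the local rotation as a regular homotopy $\{\gamma_t\}_{0\le t\le 1}$ through regular closed curves based at $p$, with $\gamma_0=\gamma$ (a local rotation around the base point is in particular such a homotopy). Lifting it starting at $\widetilde p$ yields a regular homotopy $\{\widetilde\gamma_t\}$ in $\widetilde M\subset\bbE^2$ with $\widetilde\gamma_0=\widetilde\gamma$ and $\widetilde\gamma_t(a)=\widetilde p$ for every $t$. As in the proof of \ref{thm:conti} we may choose angle functions $\theta_t$ for $\widetilde\gamma_t$ depending continuously on $t$, with $\theta_0$ a prescribed angle function for $\widetilde\gamma$. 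Set $\phi(t):=\theta_t(a)-\theta_0(a)$; it is continuous with $\phi(0)=0$, and $\theta_t(a)=\theta_0(a)+\phi(t)$, i.e.\ $e_{\widetilde\gamma_t}(a)=R_{\phi(t)}\bigl(e_{\widetilde\gamma_0}(a)\bigr)$, where $R_\phi$ denotes rotation by $\phi$. The proof then comes down to computing $\theta_t(b)$.

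First note that $t\mapsto[\gamma_t]\in\pi_1(M,p)$ is constant, so the deck transformation $T:=T_{\widetilde\gamma_t}$ and the terminal point $\widetilde\gamma_t(b)=T(\widetilde p)$ do not depend on $t$. Since $p_M\circ T=p_M$ we have $dp_M\circ dT=dp_M$, and since $\gamma_t$ is closed, $e_{\gamma_t}(b)=e_{\gamma_t}(a)$; hence $e_{\widetilde\gamma_t}(b)$ and $dT_{\widetilde p}\bigl(e_{\widetilde\gamma_t}(a)\bigr)$ are both vectors at $T(\widetilde p)$ that project under the local isometry $dp_M$ to $e_{\gamma_t}(a)$, so they coincide: $e_{\widetilde\gamma_t}(b)=dT_{\widetilde p}\bigl(e_{\widetilde\gamma_t}(a)\bigr)$. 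Now $T$ is an isometry of $\widetilde M$ for a metric conformal to the euclidean one, hence a conformal self-map of the plane domain $\widetilde M$, so it is holomorphic or antiholomorphic there according as $\gamma$ is orientation preserving or orientation reversing (\ref{def:sign}); correspondingly $dT_{\widetilde p}$ is a positive scalar times a rotation, respectively a reflection. In either case $dT_{\widetilde p}\circ R_\phi=R_{\varepsilon_\gamma\phi}\circ dT_{\widetilde p}$ for every $\phi$, where $\varepsilon_\gamma\in\{+1,-1\}$ is the sign of $\gamma$. Therefore $e_{\widetilde\gamma_t}(b)=dT_{\widetilde p}\bigl(R_{\phi(t)}e_{\widetilde\gamma_0}(a)\bigr)=R_{\varepsilon_\gamma\phi(t)}\bigl(e_{\widetilde\gamma_0}(b)\bigr)$, and since $\theta_t(b)$ and $\theta_0(b)+\varepsilon_\gamma\phi(t)$ are both continuous in $t$, congruent modulo $2\pi$, and equal at $t=0$, we conclude $\theta_t(b)=\theta_0(b)+\varepsilon_\gamma\phi(t)$.

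Combining the formulas for $\theta_t(a)$ and $\theta_t(b)$ gives $i_{\widetilde\gamma_t}=i_{\widetilde\gamma_0}+\dfrac{(\varepsilon_\gamma-1)\phi(t)}{2\pi}$ and $j_{\widetilde\gamma_t}=j_{\widetilde\gamma_0}+\dfrac{(\varepsilon_\gamma+1)\phi(t)}{2\pi}$. If $\gamma$ is orientation preserving, so $\varepsilon_\gamma=1$, the first is constant in $t$; evaluating at $t=1$ proves~(1). If $\gamma$ is orientation reversing, so $\varepsilon_\gamma=-1$, the second is constant in $t$ — literally, not just modulo $2$, with this choice of angle functions — which proves~(2).

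The step I expect to be the main obstacle is the computation at the terminal point: recognizing a deck transformation of $\widetilde M\subset\bbE^2$ as a conformal, hence holomorphic or antiholomorphic, self-map of the plane domain, and keeping careful track of the sign $\varepsilon_\gamma$ in the relation $dT_{\widetilde p}\circ R_\phi=R_{\varepsilon_\gamma\phi}\circ dT_{\widetilde p}$. Realizing the local rotation as a homotopy, lifting it, and choosing the angle functions continuously in $t$ are routine, the last being \ref{thm:conti}.
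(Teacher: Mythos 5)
Your proof is correct and takes essentially the same route as the paper: the whole content is that the deck transformation carrying $\widetilde\gamma(a)$ to $\widetilde\gamma(b)$ is conformal (orientation preserving or reversing according to $\varepsilon_\gamma$), so the terminal direction rotates by $\varepsilon_\gamma$ times the rotation of the initial direction, which is exactly the paper's much terser argument. Your write-up simply makes explicit the angle-function bookkeeping and the continuity-in-$t$ choices that the paper leaves to the reader.
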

\begin{proof}
Suppose we locally rotate $\gamma$ around the base point so that
$\widetilde\gamma$ rotates aroud $\widetilde\gamma(a)$ by an angle $\alpha$.
Then the other end of $\widetilde\gamma$ rotates around $\widetilde\gamma(b)$
by an angle $\varepsilon_\gamma\alpha$, because the deck transformation $T$ of $\widetilde M$
which sends $\widetilde\gamma(a)$ to $\widetilde\gamma(b)$ is an isometry and
$T$ is orientation preserving (resp. orientation reversing) 
if $\gamma$ is orientation preserving (resp. orientation reversing).
\end{proof}

So we would like to use the $i$-index (for an orientation preserving curve) and
the $j$-index (for an orientation reversing curve) to define the 
{\it based winding number}.
The $i$-index is an integer if the curve is null-homotopic;
but these indices may not be integers in general.

So let us take a regular closed curve $\gamma$ based at $p$ which is not null-homotopic.
Then $\widetilde p=\widetilde\gamma(a)$ and $\widetilde\gamma(b)$ are distinct points.
We first pick up the unique geodesic $\widetilde\delta:[a,b]\to\widetilde M$
connecting these two points in $\widetilde M$ with the riemannian metric induced from that of $M$.
This depends only on the homotopy class $\xi$ of $\gamma$ in $\pi_1(M,p)$ and 
on the choice of $\widetilde p$.
The geodesic $p_M\circ\widetilde\delta$ on $M$ will be denoted $\delta$.
This $\delta$ is the shortest geodesic based at $p$ that represents $\xi$ in $\pi_1(M,p)$,
and can be regarded as the simplest closed curve on $M$ that represents $\xi$.
So we would like to compare $\gamma$ with $\delta$.
Note that $\delta$ may have a corner at the base point $p$; so, we need to take
the external angle of $\delta$ at the vertex $p$ into consideration.
Fortunately, the internal angle of $\delta$ at $p$ is not 0, because
a point and a direction determines a geodesic which passes the given point and 
has the given direction there;
therefore,
the external angle cannot be equal to $\pm \pi$.
Since we are not assuming that $M$ is orientable, we fix the orientation convention
as follows.

\begin{definition} \label{def:ext-a}
Let $\widetilde\delta$ be as above, and let $\widehat\delta:
[a,b]\to\widetilde M$ be the lift of $\delta$ whose initial point $\widehat\delta(a)$
is equal to the terminal point $\widehat{p}=\widetilde\delta(b)$ of $\widetilde\delta$. 
The {\it external angle $\chi_{\delta,\widetilde p}$ of $\delta$ with respect to} $\widetilde p$
is defined to be the unique real number satisfying
\[
\chi_{\delta,\widetilde p}\equiv 
\theta_{\widehat\delta}(a) - \theta_{\widetilde\delta}(b) \mod 2\pi
\quad\text{and}\quad -\pi< \chi_{\delta,\widetilde p} <\pi.
\]
\end{definition}

Next, define two real numbers $\alpha$ and $\beta$ as follows:
\[
\alpha = \theta_{\widetilde\gamma}(a)- \theta_{\widetilde\delta}(a), \quad
\beta = \theta_{\widetilde\gamma}(b)- \theta_{\widetilde\delta}(b) .
\]
Then we have the following identities.

\begin{proposition} \label{thm:ije-ab}
$i_{\widetilde\gamma}-i_{\widetilde\delta}=\dfrac{\beta-\alpha}{2\pi},
~j_{\widetilde\gamma}-j_{\widetilde\delta}=\dfrac{\beta+\alpha}{2\pi}, 
~\chi_{\delta,\widetilde{p}}\equiv \beta-\varepsilon_\gamma\alpha$ {\rm (mod $2\pi$)}.
\end{proposition}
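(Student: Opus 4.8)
The plan is to prove the three identities by unwinding the definitions of $\alpha$, $\beta$, $\chi_{\delta,\widetilde p}$ and of the $i$- and $j$-indices, and then reducing the last (congruence) identity to the relation between the two lifts $\widetilde\delta$ and $\widehat\delta$ of the geodesic $\delta$, which is controlled by the deck transformation $T_{\widetilde\delta}$.

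First I would handle the two exact identities, which are purely formal. By \ref{def:ij} we have $2\pi\, i_{\widetilde\gamma}=\theta_{\widetilde\gamma}(b)-\theta_{\widetilde\gamma}(a)$ and similarly for $\widetilde\delta$; subtracting, $2\pi(i_{\widetilde\gamma}-i_{\widetilde\delta})=(\theta_{\widetilde\gamma}(b)-\theta_{\widetilde\delta}(b))-(\theta_{\widetilde\gamma}(a)-\theta_{\widetilde\delta}(a))=\beta-\alpha$, which is the first identity; the $j$-index computation is identical with a plus sign in place of the minus sign and yields $2\pi(j_{\widetilde\gamma}-j_{\widetilde\delta})=\beta+\alpha$. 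Here I should note that the $j$-indices depend on the chosen angle functions, but once angle functions $\theta_{\widetilde\gamma}$, $\theta_{\widetilde\delta}$ are fixed (and these are the same ones used to define $\alpha$, $\beta$), the identity is an equality of real numbers; the well-definedness mod $2$ is not needed for this statement.

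The substantive part is the congruence $\chi_{\delta,\widetilde p}\equiv\beta-\varepsilon_\gamma\alpha\pmod{2\pi}$. By \ref{def:ext-a}, $\chi_{\delta,\widetilde p}\equiv\theta_{\widehat\delta}(a)-\theta_{\widetilde\delta}(b)\pmod{2\pi}$, so it suffices to show $\theta_{\widehat\delta}(a)\equiv\theta_{\widetilde\gamma}(b)-\varepsilon_\gamma\alpha\pmod{2\pi}$, i.e. that the direction $e_{\widehat\delta}(a)$ is obtained from $e_{\widetilde\gamma}(b)$ by rotating backwards through $\varepsilon_\gamma\alpha$. The key observation is that $[\gamma]=[\delta]$ in $\pi_1(M,p)$, so $T_{\widetilde\gamma}=T_{\widetilde\delta}=:T$, and $\widehat\delta$ is by definition the lift of $\delta$ starting at $\widetilde\delta(b)=T(\widetilde p)$; since $\widetilde\delta$ starts at $\widetilde p$ we get $\widehat\delta=T\circ\widetilde\delta$, hence $e_{\widehat\delta}(a)=(dT)(e_{\widetilde\delta}(a))$. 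Now $T$ is an isometry of $\widetilde M\subset\bbE^2$, orientation preserving iff $\varepsilon_\gamma=1$; since the conformally euclidean structure makes $T$ act as a Euclidean isometry, $dT$ acts on direction angles by $\theta\mapsto\varepsilon_\gamma\theta + c$ for a constant $c$ (the rotation part), so for any two directions the angle difference transforms by multiplication by $\varepsilon_\gamma$. Applying this with the two directions $e_{\widetilde\gamma}(a)$ and $e_{\widetilde\delta}(a)$: their angle difference is $\theta_{\widetilde\gamma}(a)-\theta_{\widetilde\delta}(a)=\alpha$ by definition, so the angle difference of their images $(dT)(e_{\widetilde\gamma}(a))=e_{\widetilde\gamma}(b)$ (using $\widetilde\gamma(b)=T(\widetilde p)$ and that $\gamma$ is a regular closed curve, so $e_\gamma(a)=e_\gamma(b)$ lifts to $(dT)e_{\widetilde\gamma}(a)=e_{\widetilde\gamma}(b)$) and $(dT)(e_{\widetilde\delta}(a))=e_{\widehat\delta}(a)$ is $\varepsilon_\gamma\alpha$, i.e. $\theta_{\widetilde\gamma}(b)-\theta_{\widehat\delta}(a)\equiv\varepsilon_\gamma\alpha\pmod{2\pi}$. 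Combining with $\chi_{\delta,\widetilde p}\equiv\theta_{\widehat\delta}(a)-\theta_{\widetilde\delta}(b)$ and $\beta=\theta_{\widetilde\gamma}(b)-\theta_{\widetilde\delta}(b)$ gives $\chi_{\delta,\widetilde p}\equiv\beta-\varepsilon_\gamma\alpha\pmod{2\pi}$.

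The main obstacle is the careful bookkeeping in the last step: one must be sure that the differential of a deck transformation, read off in the Euclidean coordinates supplied by the tame conformally euclidean structure, really acts on tangent-direction angles by $\theta\mapsto\varepsilon_\gamma\theta+\text{const}$ — this uses that a conformal self-map of a planar domain which is also an isometry of the induced metric is the restriction of a Euclidean similarity, in fact (being a metric isometry) of a Euclidean isometry, hence of the form $z\mapsto az+b$ or $z\mapsto a\bar z+b$ with $|a|=1$ — and that the sign $\varepsilon_\gamma$ from \ref{def:sign} matches the holomorphic/antiholomorphic dichotomy. Once that is pinned down, the identities are immediate. I would present this as a short lemma-free argument inline, spending one sentence on why $T$ acts affinely and the rest on tracking the three angle differences.
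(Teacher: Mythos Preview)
Your argument is essentially the same as the paper's, and the first two identities together with the overall strategy for the third are handled correctly. There is, however, one misstatement to fix: you assert that the deck transformation $T$ ``acts as a Euclidean isometry'' and, in your final paragraph, that a conformal self-map of $\widetilde M$ which is also an isometry of the induced Riemannian metric must be the restriction of a Euclidean similarity $z\mapsto az+b$ or $z\mapsto a\bar z+b$. This is false in the hyperbolic case: deck transformations of $\bbH^2\subset\bbE^2$ are M\"obius transformations, conformal in the Euclidean sense and isometric for the hyperbolic metric, but generically \emph{not} affine (e.g.\ $z\mapsto -1/z$). What your argument actually needs---and what the paper uses implicitly---is only that $T$ is conformal with respect to the Euclidean metric, so that its differential $dT_{\widetilde p}$ \emph{at the single point} $\widetilde p$ is a Euclidean similarity. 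That is enough to conclude that the angle difference between $e_{\widetilde\gamma}(a)$ and $e_{\widetilde\delta}(a)$, both tangent at $\widetilde p$, is multiplied by $\varepsilon_\gamma$ under $dT$, giving $\theta_{\widehat\gamma}(a)-\theta_{\widehat\delta}(a)\equiv\varepsilon_\gamma\alpha\pmod{2\pi}$. With this correction your proof goes through and coincides with the paper's.
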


\begin{proof}
The first two are obvious.  The third identity (mod $2\pi$) can be proved as follows:
\begin{align*}
\chi_{\delta,\widetilde p}\equiv\theta_{\widehat\delta}(a) - \theta_{\widetilde\delta}(b)
&= (\theta_{\widehat\gamma}(a)-\theta_{\widetilde\delta}(b))
-(\theta_{\widehat\gamma}(a)-\theta_{\widehat\delta}(a))\\
&\equiv (\theta_{\widehat\gamma}(a)-\theta_{\widetilde\delta}(b))
-\varepsilon_\gamma(\theta_{\widetilde\gamma}(a)-\theta_{\widetilde\delta}(a))
=\beta-\varepsilon_\gamma\alpha~.
\end{align*}
\end{proof}

Thus, if $\gamma$ is homotopically non-trivial, the following is true:

\begin{corollary} \label{thm:ij-int}If $\varepsilon_\gamma=1$, 
then $i_{\widetilde\gamma}-i_{\widetilde\delta}-\dfrac{\chi_{\delta,\widetilde{p}}}{2\pi}$
is an integer.
If $\varepsilon_\gamma=-1$,
then $j_{\widetilde\gamma}-j_{\widetilde\delta}-\dfrac{\chi_{\delta,\widetilde{p}}}{2\pi}$
is an integer.
\end{corollary}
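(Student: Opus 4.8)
The plan is to read the corollary off directly from Proposition~\ref{thm:ije-ab}: that proposition already records the three relations among $\alpha$, $\beta$, $\varepsilon_\gamma$ and the indices, and the corollary is just the observation that two of those relations combine to an \emph{exact} integer once one passes to the congruence modulo $2\pi$.

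First I would treat the orientation preserving case $\varepsilon_\gamma=1$. Here the third identity of \ref{thm:ije-ab} reads $\chi_{\delta,\widetilde p}\equiv\beta-\alpha\pmod{2\pi}$, while the first identity reads $i_{\widetilde\gamma}-i_{\widetilde\delta}=(\beta-\alpha)/2\pi$. Subtracting gives $i_{\widetilde\gamma}-i_{\widetilde\delta}-\chi_{\delta,\widetilde p}/2\pi=\bigl((\beta-\alpha)-\chi_{\delta,\widetilde p}\bigr)/2\pi$, whose numerator is an integer multiple of $2\pi$ by the congruence; hence the quotient lies in $\bbZ$.

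For the orientation reversing case $\varepsilon_\gamma=-1$ the argument is identical, now pairing the second identity $j_{\widetilde\gamma}-j_{\widetilde\delta}=(\beta+\alpha)/2\pi$ with the congruence $\chi_{\delta,\widetilde p}\equiv\beta+\alpha\pmod{2\pi}$, which is what the third identity of \ref{thm:ije-ab} becomes when $\varepsilon_\gamma=-1$; again $\bigl((\beta+\alpha)-\chi_{\delta,\widetilde p}\bigr)/2\pi\in\bbZ$.

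I do not expect any real obstacle here; the single point to keep in mind is the asymmetry already built into \ref{thm:ije-ab}: $\alpha$ and $\beta$ are genuine real numbers (differences of the chosen angle functions, not classes modulo $2\pi$), so the first two identities are honest equalities in $\bbR$ while only the third is a congruence. It is precisely this asymmetry that makes the displayed combination land in $\bbZ$ rather than merely in $\bbR/\bbZ$ or $\tfrac12\bbZ$, so the only care needed is not to quotient $\alpha,\beta$ prematurely.
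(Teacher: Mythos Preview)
Your argument is correct and is exactly the reading the paper intends: the corollary is stated immediately after Proposition~\ref{thm:ije-ab} with no separate proof, being a direct combination of the first or second identity there with the third congruence modulo $2\pi$. Your closing remark about not reducing $\alpha,\beta$ prematurely modulo $2\pi$ is the only subtlety, and you have it right.
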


\begin{definition} \label{def:wn}
The {\it based winding number} $w_{\widetilde p}(\gamma)$ of $\gamma$
is defined as follows:
\[
w_{\widetilde p}(\gamma)=\begin{cases}
i_{\widetilde\gamma}\in\bbZ & \text{if $\gamma$ is null-homotopic,}\\
i_{\widetilde\gamma}-i_{\widetilde\delta} -\dfrac{\chi_{\delta,\widetilde p}}{2\pi}\in\bbZ
& \text{if $\varepsilon_\gamma=1$ and $\gamma$ is not null-homotopic, }\\
j_{\widetilde\gamma}-j_{\widetilde\delta} -\dfrac{\chi_{\delta,\widetilde p}}{2\pi} + 2\bbZ \in\bbZ/2\bbZ
& \text{if $\varepsilon_\gamma=-1$~.}
\end{cases}
\]
\end{definition}

\noindent
{\bf Proof of Theorem \ref{thm:main1}.}

\noindent
(1) $\Rightarrow$ (2): 
$[\gamma_1]=[\gamma_2]\in\pi_1(M,p)$ is obvious.
Let $\widetilde\gamma_j$ ($j=1,2$) be the lift of $\gamma_j$ whose initial point is $\widetilde p$.
Then $\widetilde\gamma_j$'s are regularly homotopic fixing the base point.
By such a regular homotopy, the $i$-indices and $j$-indices change continuously.
When $\gamma_j$'s are homotopically non-trivial, 
the geodesic $\delta$ is unchanged;
therefore, $w_{\widetilde p}(\gamma)$ changes continuously.
But the value set is discrete, so it does not change.

\noindent
(2) $\Rightarrow$ (1):
Let $\widetilde\gamma_j$ ($j=1,2$) be the lift of $\gamma_j$ whose initial point is $\widetilde p$.
First assume that $\gamma_1$ and $\gamma_2$ are null-homotopic.
Then $\widetilde\gamma_j$'s are regular closed curves based at $\widetilde p$.
By a local rotation of $\gamma_2$ around the base point,
we may assume that $\gamma_j$'s have the same direction at the base point.
Note that a local rotation is a regular homotopy, 
and that, by \ref{thm:rotation}, it does not change $i_{\widetilde\gamma_2}$.
Now by \ref{thm:main0}, we obtain a desired
regular homotopy between $\gamma_1$ and $\gamma_2$.

Next assume that $\gamma_1$ and $\gamma_2$ are not null-homotopic.
By the condition $[\gamma_1]=[\gamma_2]\in \pi_1(M,p)$, the terminal points of
$\widetilde\gamma_1$ and $\widetilde\gamma_2$ are the same.
A local rotation of $\gamma_j$ around $p$ is a regular homotopy, 
so it does not change the situation,
and we may assume that the initial directions of $\widetilde\gamma_1$ and
$\widetilde\gamma_2$ are both $(1,0)$, and 
the terminal directions of $\widetilde\gamma_1$ and $\widetilde\gamma_2$ are the same.
Let $\widehat p$ denote the common terminal point of $\widetilde\gamma_j$'s.
If $\gamma_j$'s are orientation preserving, then 
$w_{\widetilde p}(\gamma_1)=w_{\widetilde p}(\gamma_2)$
implies that $i_{\widetilde\gamma_1}=i_{\widetilde\gamma_2}$ and the result follows 
from \ref{thm:main0}.
If $\gamma_j$'s are orientation reversing, then $w_{\widetilde p}(\gamma_1)=w_{\widetilde p}(\gamma_2)$
implies that $j_{\widetilde\gamma_1}\equiv j_{\widetilde\gamma_2} \mod 2$.
Choose the angle function $\theta_{\widetilde\gamma_j}$
of $\widetilde\gamma_j$ satisfying $\theta_{\widetilde\gamma_j}(a)=0$ ($j=1,2$).
Then we have
\[
i_{\widetilde\gamma_1}=j_{\widetilde\gamma_1}=\frac{\theta_{\widetilde\gamma_1}(b)}{2\pi}, \qquad
i_{\widetilde\gamma_2}=j_{\widetilde\gamma_2}=\frac{\theta_{\widetilde\gamma_2}(b)}{2\pi}, \qquad
\text{and} \qquad j_{\widetilde\gamma_1}\equiv j_{\widetilde\gamma_2}\mod 2.
\]
Therefore, there exists an integer $k$ such that
\[
\theta_{\widetilde\gamma_1}(b)=\theta_{\widetilde\gamma_2}(b)+4k\pi.
\]
Now let us locally rotate $\gamma_2$ around $p$ by an angle of magnitude $2|k|\pi$
to get a new curve $\gamma_2'$.
This induces local rotations of $\widetilde\gamma_2$ around $\widetilde p$ and $\widehat p$
by angle of magnitude $2|k|\pi$, but the sign of the angles are opposite.
We choose the rotation so that the angle function $\theta_{\widetilde\gamma'_2}$ satisfies
\[
\theta_{\widetilde\gamma'_2}(b)=\theta_{\widetilde\gamma_2}(b)+2k\pi, \qquad
\theta_{\widetilde\gamma'_2}(a)=\theta_{\widetilde\gamma_2}(a)-2k\pi = -2k\pi.
\]
Then
\[
i_{\widetilde\gamma'_2}=\frac{(\theta_{\widetilde\gamma_2}(b)+2k\pi)-(-2k\pi)}{2\pi}=
\frac{\theta_{\widetilde\gamma_2}(b)+4k\pi}{2\pi}=\frac{\theta_{\widetilde\gamma_1}(b)}{2\pi}=
i_{\widetilde\gamma_1}
\]
Now we have a regular homotopy between $\gamma_1$ and $\gamma_2'$ by \ref{thm:main0}.
\hfill$\square$

We close this section by proving the following formula.

\begin{proposition} \label{thm:wn-lifts}
Let $\widetilde p$ and $\overline{p}$ be two lifts of $p$.
Then $w_{\overline{p}}(\gamma)=\varepsilon(\widetilde p,\overline p)
w_{\widetilde p}(\gamma)$.
\end{proposition}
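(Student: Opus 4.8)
The plan is to compare the two descriptions of the based winding number, one relative to $\widetilde p$ and one relative to $\overline p$, by tracking what the deck transformation $S$ with $S(\widetilde p)=\overline p$ does to all the ingredients in Definition \ref{def:wn}. Since $S$ is an isometry of $\widetilde M$, and $\widetilde M$ sits conformally in $\bbE^2$, the map $S$ is (the restriction of) a euclidean isometry of $\bbE^2$; it is orientation preserving exactly when $\varepsilon(\widetilde p,\overline p)=1$ and orientation reversing otherwise. The lift $\overline\gamma$ of $\gamma$ starting at $\overline p$ is $S\circ\widetilde\gamma$, and likewise $\overline\delta=S\circ\widetilde\delta$, $\overline{\widehat\delta}=S\circ\widehat\delta$, so the whole picture over $\overline p$ is the $S$-image of the picture over $\widetilde p$.

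First I would record the elementary fact that if $R$ is a euclidean isometry and $\mu$ is a regular curve, then the angle function of $R\circ\mu$ is $\theta_\mu+c$ when $R$ is orientation preserving (with $c$ the rotation angle of $R$) and $-\theta_\mu+c$ when $R$ is orientation reversing; consequently $i_{R\circ\mu}=\varepsilon_R\, i_\mu$ and $j_{R\circ\mu}=\varepsilon_R\, j_\mu + (\text{const})$, and for the external angle $\chi_{\delta,\overline p}\equiv \varepsilon_R\,\chi_{\delta,\widetilde p}\pmod{2\pi}$, where $\varepsilon_R=\pm1$ according to orientation. Applying this with $R=S$ and $\varepsilon_S=\varepsilon(\widetilde p,\overline p)$ handles each of the three cases of Definition \ref{def:wn} in turn. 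In the null-homotopic case, $w_{\overline p}(\gamma)=i_{\overline\gamma}=\varepsilon_S\,i_{\widetilde\gamma}=\varepsilon_S\,w_{\widetilde p}(\gamma)$. In the orientation-preserving non-trivial case, $i_{\overline\gamma}-i_{\overline\delta}-\chi_{\delta,\overline p}/2\pi = \varepsilon_S\bigl(i_{\widetilde\gamma}-i_{\widetilde\delta}\bigr) - \varepsilon_S\,\chi_{\delta,\widetilde p}/2\pi$, the additive constants from the $i$-indices cancelling in the difference; this is $\varepsilon_S\,w_{\widetilde p}(\gamma)$. In the orientation-reversing case the same cancellation works for the $j$-indices, and one must check the constants are consistent mod $2$, which follows because both sides are known a priori to be integers by \ref{thm:ij-int} (and by \ref{thm:i2j}), so the identity mod $2$ is forced once it holds mod nothing in the integers — more directly, the additive constant $2c/2\pi$ that appears must be even since both $w_{\overline p}(\gamma)$ and $\varepsilon_S\,w_{\widetilde p}(\gamma)$ lie in $\bbZ$ before reduction.

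The one genuine subtlety, and the step I expect to be the main obstacle, is the geodesic $\delta$ and its external angle: I must be sure that the geodesic $\widetilde\delta$ used to define $w_{\overline p}$ really is $S\circ\widetilde\delta$. This is where uniqueness of shortest geodesics in the tameness hypothesis (Definition \ref{def:ces}(1)) is essential: $\overline\gamma=S\circ\widetilde\gamma$ runs from $\overline p$ to $S(\widehat\delta(a))$, wait — more carefully, $\overline\gamma$ runs from $\overline p=S(\widetilde p)$ to $S(\widetilde\gamma(b))$, and $\widetilde\gamma(b)=T_{\widetilde\gamma}(\widetilde p)$, so the relevant geodesic over $\overline p$ connects $\overline p$ to $S(T_{\widetilde\gamma}(\widetilde p))$; since $S$ is an isometry it carries the unique shortest geodesic from $\widetilde p$ to $T_{\widetilde\gamma}(\widetilde p)$ to the unique shortest geodesic from $\overline p$ to $ST_{\widetilde\gamma}(\widetilde p)$, so indeed the geodesic over $\overline p$ is $S\circ\widetilde\delta$ and likewise the secondary lift is $S\circ\widehat\delta$. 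With that identification in hand, the external-angle transformation law $\chi_{\delta,\overline p}\equiv\varepsilon_S\,\chi_{\delta,\widetilde p}\pmod{2\pi}$ is immediate from the angle-function transformation law applied to $\widetilde\delta$ and $\widehat\delta$ at the appropriate endpoints, and assembling the three cases completes the proof.
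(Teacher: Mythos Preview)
Your argument rests on the assertion that the deck transformation $S$ is (the restriction of) a euclidean isometry of $\bbE^2$, and this is false in general. The deck transformation $S$ is an isometry of $\widetilde M$ \emph{with respect to the riemannian metric pulled back from $M$}, but the embedding $\widetilde M\subset\bbE^2$ is only conformal, not isometric. Hence $S$ is merely a conformal self-map of $\widetilde M\subset\bbE^2$, not a euclidean rigid motion. The most important case illustrates the failure clearly: when $M$ is hyperbolic and $\widetilde M=\bbH^2$ is the Poincar\'e disc or upper half plane, the deck transformations are M\"obius transformations, and these are certainly not euclidean isometries. For such a map the derivative $DS_x$ is a positive scalar times an orthogonal matrix, but \emph{the orthogonal part varies with $x$}; consequently the angle function of $S\circ\mu$ is \emph{not} of the form $\pm\theta_\mu+c$ for a constant $c$, and your transformation laws $i_{S\circ\mu}=\varepsilon_S\,i_\mu$ and $j_{S\circ\mu}\equiv\varepsilon_S\,j_\mu+\text{const}$ simply do not hold.

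The paper confronts exactly this difficulty. In the null-homotopic case it shrinks $\widetilde\gamma$ toward $\widetilde p$ and observes that the rescaled deck transformation converges smoothly to its linearization $x\mapsto (DS)_{\widetilde p}(x-\widetilde p)+\overline p$, which \emph{is} a euclidean similarity, so that in the limit your formula becomes valid; the $i$-index is unchanged by the shrinking since it is a regular-homotopy invariant. In the non-null-homotopic case one cannot shrink to a point, so the paper instead reduces to comparing $\widetilde\gamma$ with a locally rotated copy $\widetilde\delta'$ of the geodesic $\widetilde\delta$, transports the resulting regular homotopy through $S$ (which preserves $i$-indices because regular homotopies do, via \ref{thm:key}), and reads off the effect of $S$ on the local rotation angles at the two endpoints, where only the pointwise conformality of $S$ is needed. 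Your identification $\overline\delta=S\circ\widetilde\delta$ via uniqueness of geodesics is correct and is used in the paper as well; it is the global angle-function transformation law that breaks down.
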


\begin{proof}
Let $\widetilde\gamma$ and $\overline\gamma$ be the lifts of $\gamma$
whose initial points are $\widetilde p$ and $\overline p$, respectiely.
Note that the deck transformation $T$ that sends $\widetilde p$ to $\overline p$
maps $\widetilde\gamma$ to $\overline\gamma$ isometrically with
respect to the riemannian metric coming from that of $M$, but 
only conformally with respect to the euclidean metric.

First assume that $\gamma$ is null-homotopic.
The two curves $\widetilde\gamma$ and $\overline\gamma$ are both regular closed curves.
If we squeeze $\widetilde\gamma$ into a small neighborhood of $\widetilde p$
by a regular homotopy fixing the base and the base direction,
then $\gamma$ and $\overline\gamma$ also deform into small neighborhoods of the bases
in similar ways.
This does not change $i_{\widetilde\gamma}$ and $i_{\overline\gamma}$.
If we keep squeezing $\widetilde\gamma$ smaller and smaller,
the geometry coming from $\widetilde M$ around $\widetilde\gamma$ and
$\overline\gamma$ becomes almost euclidean, and the deck transformation $T$
sends $\widetilde\gamma$ to $\overline\gamma$ keeping the shape 
almost the same with respect to the euclidean metric, after rescaling.
So, we have $i_{\overline\gamma}=\varepsilon(\widetilde p,\overline p)
i_{\widetilde\gamma}$, as desired.
We can make this argument more rigorous by expanding $\overline\gamma$ from $\overline p$
by the factor of $1/s$ 
when we shrink $\widetilde\gamma$ toward $\widetilde p$ by the factor of $s$ ($0<s\leq 1$), 
so that we can actually see the changes as follows.
Define a homotopy $H:\widetilde M\times[0,1]\to\bbE^2$ of $T$ by
\[
H(x,s)=\begin{cases}
\dfrac{1}{s}(T(s x+(1-s)\widetilde p)-q) +q & (0<s\leqq 1)\\
A(x-\widetilde p)+q=:g(x) & (s=0)
\end{cases}
\qquad \text{($x\in \widetilde M$).}
\]
Here $A$ denotes the Jacobi matrix $(DT)_{\widetilde p}$ of $T$ at $\widetilde p$, and
it is a multiple of an orthogonal matrix by a positive number, because $T$ is conformal
with respect to the euclidean metric.
This defines a smooth isotopy between $T$ and $g$ described above.
See Milnor's argument on p.34 of \cite{M}.
Therefore we have
\[
i_{\overline\gamma}=i_{T\circ\widetilde\gamma}=i_{g\circ\widetilde\gamma}=
\varepsilon(\widetilde p, \overline p)i_{\widetilde\gamma}.
\]

Next assume that $\gamma$ is not null-homotopic.
Let $\widetilde\delta$ be the shortest geodesic in $\widetilde M$ that connects 
$\widetilde\gamma(a)$ and $\widetilde\gamma(b)$.
Then $\overline\delta=T\circ\widetilde\delta$ is the shortest geodesic in $\widetilde M$
connecting $\overline\gamma(a)$ and $\overline\gamma(b)$.

We prove the following three identities which confirm the proposition.
\begin{equation} \label{eqn:*}
i_{\overline\gamma}-i_{\overline\delta}=\varepsilon(\widetilde p,\overline p)
(i_{\widetilde\gamma}-i_{\widetilde\delta}), 
\qquad 
j_{\overline\gamma}-j_{\overline\delta}\overset{\mathrm{mod}\ 2}{\equiv}
\varepsilon(\widetilde p,\overline p)
(j_{\widetilde\gamma}-j_{\widetilde\delta}), 
\qquad 
\chi_{\delta,\overline p}=\varepsilon(\widetilde p,\overline p)
\chi_{\delta,\widetilde p}. \tag{*}
\end{equation}
Recall from \ref{thm:ije-ab} that $i_{\widetilde\gamma}-i_{\widetilde\delta}$
is equal to $\frac{\beta-\alpha}{2\pi}$, where 
$\alpha=\theta_{\widetilde\gamma}(a)-\theta_{\widetilde\delta}(a)$
and $\beta=\theta_{\widetilde\gamma}(b)-\theta_{\widetilde\delta}(b)$.
Make a local rotation of $\widetilde\delta$ around the initial point
$\widetilde p$ by the angle $\alpha$, and another local rotation
around the terminal point $\widehat p$ by the angle $\beta$
to obtain a new curve $\widetilde{\delta}'$.
Now $\widetilde{\delta}'$
and $\widetilde\gamma$ have the same ends and the same end directions.
Choose an angle function $\theta'$ for $\widetilde{\delta}'$ which
coincides with $\theta_{\widetilde\delta}$ except near the ends.
Then we have
\[
\theta'(b)=\theta_{\widetilde\delta}(b)+\beta, \qquad
\theta'(a)=\theta_{\widetilde\delta}(a)+\alpha.
\]
From these, we obtain
\[
\theta'(b)-\theta'(a)=(\theta_{\widetilde\delta}(b)-\theta_{\widetilde\delta}(a))
+(\beta-\alpha) \quad\text{\it i.e.}\quad 
i_{\widetilde\delta'}-i_{\widetilde\delta}=\frac{\beta-\alpha}{2\pi}.
\]
Therefore, we have $i_{\widetilde\gamma}=i_{\widetilde\delta'}$.
By \ref{thm:key}, $\widetilde\gamma$ and $\widetilde\delta'$ are regularly
homotopic fixing the ends and the end directions.
Composing this regular homotopy with $T$, we obtain a regular homotopy
between $\overline\gamma$ and $T\circ\widetilde\delta'$
fixing the ends and the end directions; hence, we have 
$i_{\overline\gamma}=i_{T\circ\widetilde\delta'}$ again by \ref{thm:key}.
Note that $\overline\delta=T\circ\widetilde\delta$ and $T\circ\widetilde\delta'$ are the same
except near the ends and that
$T\circ\widetilde\delta'$ is obtained by rotating 
$\overline\delta=T\circ\widetilde\delta$ by the angle 
$\varepsilon(\widetilde p,\overline p)\alpha$ around the initial point
$\overline\delta(a)$ and by the angle
$\varepsilon(\widetilde p,\overline p)\beta$ around the terminal point
$\overline\delta(b)$.
Thus, we have the first identity:
\[
i_{\overline\gamma}=i_{T\circ\widetilde\delta'}=i_{\overline\delta}+
\varepsilon(\widetilde p,\overline p)\frac{\beta-\alpha}{2\pi}
=i_{\overline\delta}+
\varepsilon(\widetilde p,\overline p)(i_{\widetilde\gamma}-i_{\widetilde\delta}).
\]
Now, using \ref{thm:i2j} and \ref{thm:ije-ab}, we obtain the second identity:
\[
j_{\overline\gamma}\overset{\mathrm{mod}\ 2}{\equiv}
j_{T\circ\widetilde\delta'}=j_{\overline\delta}+
\varepsilon(\widetilde p,\overline p)\frac{\beta+\alpha}{2\pi}
=j_{\overline\delta}+
\varepsilon(\widetilde p,\overline p)(j_{\widetilde\gamma}-j_{\widetilde\delta}).
\]
Similarly, the external angles $\chi_{\delta,\widetilde p}$ and $\chi_{\delta,\overline p}$
can be computed using  the local data on the angles around $\widetilde\gamma(b)$
and $\overline\gamma(b)$; 
therefore, we have the third identity. This finishes the proof of the identities (\ref{eqn:*})
and the proof of \ref{thm:wn-lifts}.
\end{proof}

\begin{corollary} \label{thm:indep}
If $M$ is orientable or if $\gamma$ is orientation reversing,
then $w_{\widetilde p}(\gamma)$ is independent of the choice of the lift $\widetilde p$
of the base point $p$ of $\gamma$.
\end{corollary}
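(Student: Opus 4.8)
The plan is to deduce this immediately from Proposition \ref{thm:wn-lifts}, which states that for two lifts $\widetilde p$ and $\overline p$ of the base point $p$ one has $w_{\overline p}(\gamma)=\varepsilon(\widetilde p,\overline p)\,w_{\widetilde p}(\gamma)$, where $\varepsilon(\widetilde p,\overline p)=\pm 1$ according to whether the deck transformation carrying $\widetilde p$ to $\overline p$ preserves or reverses the orientation of $\widetilde M$. The corollary then follows by checking that under either of the two hypotheses the factor $\varepsilon(\widetilde p,\overline p)$ can be dropped.

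First I would treat the orientable case. If $M$ is orientable, then every deck transformation of $\widetilde M$ is orientation preserving, so $\varepsilon(\widetilde p,\overline p)=1$ for any pair of lifts, and hence $w_{\overline p}(\gamma)=w_{\widetilde p}(\gamma)$.

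Next I would treat the case where $\gamma$ is orientation reversing, with $M$ possibly non-orientable. By Definition \ref{def:wn}, $w_{\widetilde p}(\gamma)$ then takes values in $\bbZ/2\bbZ$, and in this group multiplication by $-1$ is the identity map, since $-1\equiv 1\pmod{2}$. Therefore $\varepsilon(\widetilde p,\overline p)\,w_{\widetilde p}(\gamma)=w_{\widetilde p}(\gamma)$ no matter what the sign $\varepsilon(\widetilde p,\overline p)$ is, and again $w_{\overline p}(\gamma)=w_{\widetilde p}(\gamma)$.

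I do not expect any real obstacle: the only situation in which the choice of lift genuinely matters is when $M$ is non-orientable \emph{and} $\gamma$ is orientation preserving, for then $w_{\widetilde p}(\gamma)\in\bbZ$ and the sign $\varepsilon(\widetilde p,\overline p)$ may be $-1$, giving $w_{\overline p}(\gamma)=-w_{\widetilde p}(\gamma)$. This is precisely the ``sign problem'' discussed in the introduction, and it is the reason the free winding number in case (W3) is defined as $|w_{\widetilde p}(\gamma)|\in\bbZ_+$; it lies outside the scope of the present corollary.
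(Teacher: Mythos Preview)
Your proof is correct and is exactly the argument the paper intends: the corollary is stated immediately after Proposition~\ref{thm:wn-lifts} with no separate proof, so deducing it from the sign relation $w_{\overline p}(\gamma)=\varepsilon(\widetilde p,\overline p)\,w_{\widetilde p}(\gamma)$ by noting that $\varepsilon=1$ when $M$ is orientable and that $-1$ acts trivially on $\bbZ/2\bbZ$ when $\gamma$ is orientation reversing is precisely what is expected.
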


\bigbreak
\section{Free regular homotopy of closed curves}

In this section, we assume that $M$ is a riemannian 
surface with a tame conformally euclidean structure, and
study the free regular homotopy classification of regular closed curves on $M$.
We first fix a closed curve $\gamma_0$ in $M$ based at $p_0$, and,
for a regular closed curve $\gamma$ which is freely homotopic to $\gamma_0$
as closed curves, we define the ({\it free}) {\it winding number
$W_{\widetilde\gamma_0}(\gamma)$ of $\gamma$},
which is used for the free regular homotopy classification \ref{thm:main2}.

We sometimes need to compose consecutive curves/homotopies.
Usually this needs reparametrization of the curves/homotopies,
but we do not mention such reparametrizations.  

Let us first look at the effect of a (free) regular homotopy of regular closed curves
on the based winding number $w_{\widetilde p}(\gamma)$.

\begin{proposition} \label{thm:wn-reg-h}
Suppose that $\{\gamma_t|1\leq t\leq 2\}$ is a regular homotopy between regular closed curves 
$\gamma_1$ and $\gamma_2$ on $M$ based at $p_1$ and $p_2$,
and let $\{\widetilde\gamma_t\}$ be a lift of the homotopy $\{\gamma_t\}$ in $\widetilde M$.
Let $\widetilde p_1$ and $\overline{p}_2$ be the initial points of 
$\widetilde\gamma_1$ and $\widetilde\gamma_2$, respectively.
Then the identity
$w_{\widetilde p_1}(\gamma_1)=w_{\overline{p}_2}(\gamma_2)$ holds.
Furthermore, if $p_1=p_2$, then 
$w_{\widetilde p_1}(\gamma_1)=
\varepsilon(\widetilde p_1,\widetilde p_2)w_{\widetilde p_1}(\gamma_2)$ holds.
\end{proposition}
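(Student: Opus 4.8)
The plan is to show that the quantity $w_{\widetilde\gamma_t(a)}(\gamma_t)$ --- the based winding number of the regular closed curve $\gamma_t$ taken with respect to the lift $\widetilde\gamma_t(a)$ of its base point --- is a \emph{continuous} function of $t\in[1,2]$ with values in a \emph{discrete} set, hence is constant; evaluating at $t=1$ and $t=2$ then yields $w_{\widetilde p_1}(\gamma_1)=w_{\overline p_2}(\gamma_2)$.

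First I would record what stays rigid along the homotopy. The endpoints $\widetilde\gamma_t(a)$ and $\widetilde\gamma_t(b)$ move continuously in $\widetilde M$, and the deck transformation $T_t$ carrying $\widetilde\gamma_t(a)$ to $\widetilde\gamma_t(b)$ depends continuously on $t$ while ranging in the discrete group of deck transformations; hence $T_t=:T$ is independent of $t$. Equivalently: a free homotopy of closed curves fixes the conjugacy class of the loop, so all the $\gamma_t$ are simultaneously null-homotopic or simultaneously non-null-homotopic, and, the orientation homomorphism $\pi_1(M)\to\{\pm1\}$ being a class function, $\varepsilon_{\gamma_t}$ is constant. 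Thus one and the same clause of Definition \ref{def:wn} computes $w_{\widetilde\gamma_t(a)}(\gamma_t)$ for all $t$.

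In the null-homotopic case each $\widetilde\gamma_t$ is a regular closed curve in $\bbE^2$, $w_{\widetilde\gamma_t(a)}(\gamma_t)=i_{\widetilde\gamma_t}$, and since $\{\widetilde\gamma_t\}$ is a regular homotopy of plane curves, $i_{\widetilde\gamma_t}$ is continuous in $t$ by Proposition \ref{thm:conti}; being $\bbZ$-valued it is constant. In the non-null-homotopic case the distinct points $\widetilde\gamma_t(a),\widetilde\gamma_t(b)$ are joined by the unique shortest geodesic $\widetilde\delta_t$ of the tame structure; by uniqueness (condition (1) of Definition \ref{def:ces}), together with a routine compactness and lower-semicontinuity argument (the endpoints and geodesic lengths stay bounded over the compact interval $[1,2]$), $\widetilde\delta_t$ --- and with it its end directions --- depends continuously on $t$. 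Hence $i_{\widetilde\delta_t}$, $j_{\widetilde\delta_t}\bmod 2$, and the angle $\theta_{\widehat\delta_t}(a)-\theta_{\widetilde\delta_t}(b)\bmod 2\pi$ all vary continuously with $t$ (here $\widehat\delta_t=T\circ\widetilde\delta_t$ moves continuously too). Since the internal angle of a geodesic at a vertex is never $0$ --- a point and a direction determine a unique geodesic --- this last quantity never represents $\pi$, so its representative $\chi_{\delta_t,\widetilde\gamma_t(a)}\in(-\pi,\pi)$ from Definition \ref{def:ext-a} is continuous in $t$. Together with the continuity of $i_{\widetilde\gamma_t}$ and $j_{\widetilde\gamma_t}\bmod 2$ (Proposition \ref{thm:conti}), the appropriate clause of Definition \ref{def:wn} makes $w_{\widetilde\gamma_t(a)}(\gamma_t)$ a continuous $\bbZ$-valued (resp. $\bbZ/2\bbZ$-valued) function of $t$, hence constant. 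This establishes $w_{\widetilde p_1}(\gamma_1)=w_{\overline p_2}(\gamma_2)$.

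For the last assertion, let $p_1=p_2=:p$ and write $\widetilde p_2=\overline p_2$ for the lift of $p$ at which the lifted homotopy ends; then the identity just proved reads $w_{\widetilde p_1}(\gamma_1)=w_{\widetilde p_2}(\gamma_2)$, and applying Proposition \ref{thm:wn-lifts} to the two lifts $\widetilde p_1,\widetilde p_2$ of $p$ gives $w_{\widetilde p_2}(\gamma_2)=\varepsilon(\widetilde p_1,\widetilde p_2)\,w_{\widetilde p_1}(\gamma_2)$, whence $w_{\widetilde p_1}(\gamma_1)=\varepsilon(\widetilde p_1,\widetilde p_2)\,w_{\widetilde p_1}(\gamma_2)$. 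The one step that needs genuine care is the continuous dependence of the shortest geodesic $\widetilde\delta_t$ on the moving endpoints, and the resulting continuity of the external angle $\chi_{\delta_t,\widetilde\gamma_t(a)}$ --- in particular the fact that it never degenerates to $\pm\pi$; everything else is bookkeeping around Propositions \ref{thm:conti} and \ref{thm:wn-lifts} and the case division of Definition \ref{def:wn}.
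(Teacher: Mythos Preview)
Your proof is correct and follows essentially the same approach as the paper's: show that each ingredient of Definition~\ref{def:wn} varies continuously along the lifted homotopy, then invoke discreteness, and deduce the second assertion from Proposition~\ref{thm:wn-lifts}. Your write-up is in fact more careful than the paper's, which simply asserts that $i_{\widetilde\gamma}-i_{\widetilde\delta}$, $j_{\widetilde\gamma}-j_{\widetilde\delta}$, and $\chi_{\delta,\widetilde p}$ change continuously without spelling out the continuous dependence of $\widetilde\delta_t$ on its endpoints or the non-degeneracy of the external angle.
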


\begin{proof}
First let us consider the case when $\gamma_j$'s are null-homotopic.
In this case, $w_{\widetilde p}(\gamma)$ is the integer $i_{\widetilde\gamma}$,
where $\widetilde\gamma$ is a lift of $\gamma$ with initial point $\widetilde p$.
The regular homotopy of $\widetilde\gamma$ changes $i_{\widetilde p}$ continuously
and the set of integers are discrete; therefore, it must be constant.

The case when $\gamma_j$'s are not null-homotopic can be handled similarly,
because $i_{\widetilde\gamma}-i_{\widetilde\delta}$, $j_{\widetilde\gamma}-j_{\widetilde\delta}$, 
and the external angle $\chi_{\delta,\widetilde p}$ change continuously.

Now \ref{thm:wn-lifts} implies the second part immediately.
\end{proof}

Note that $\overline{p}_2$ is determined by $\widetilde p_1$ and the regular homotopy.

\begin{definition} \label{def:W1-W2}
Let $\gamma$ be a regular closed curve on $M$.
\begin{itemize}
\item[(W1)] If $M$ is orientable, then $W(\gamma)\in\bbZ$ is defined to be $w_{\widetilde p}(\gamma)$, 
where $\widetilde p$ is any lift of the base point $p$ of $\gamma$.
\item[(W2)] If $M$ is non-orientable and $\gamma$ is orientation reversing, then
$W(\gamma)\in\bbZ/2\bbZ$ is defined to be $w_{\widetilde p}(\gamma)$, where $\widetilde p$ is
any lift of the base point $p$ of $\gamma$.
\end{itemize}
\end{definition}

Note that $W(\gamma)$ is independent of the choice of $\widetilde p$ by \ref{thm:wn-lifts},
and that it is a free regular homotopy invariant by
\ref{thm:wn-lifts} and \ref{thm:wn-reg-h}, in these cases.

Before continuing the definition, let us look at a very important class of regular homotopies.

\begin{example} \label{ex:finger}
Let $\gamma$ be a regular closed curve with base point $p$, and $\xi$
be a regular curve with initial point $p$ such that the initial direction
of $\delta$ and the base direction of $\gamma$ are linearly independent.
A finger move of the base point of $\gamma$ along the curve $\xi$ is a typical
example of a regular homotopy.
When the curve $\xi$ is closed, the homotopical effect of
the finger move of this type is the
conjugation $[\xi]^{-1}[\gamma][\xi]\in\pi_1(M,p)$ of the class $[\gamma]$.
\end{example}

Suppose $M$ is non-orientable and $\gamma$ is a regular closed curve on $M$ based at $p$.
Take an orientation reversing regular loop $\xi$ based at $p$, and
perform a finger move of $\gamma$ along $\xi$ to obtain a regular closed curve $\gamma'$.
By \ref{thm:wn-reg-h}, we have $w_{\widetilde p}(\gamma')=-w_{\widetilde p}(\gamma)$.
We are particularly interested in the case where $[\gamma]=[\gamma']\in\pi_1(M,p)$
({\it i.e.} $[\gamma]$ and $[\xi]$ commute in $\pi_1(M,p)$).
This is the reason we introduce the notion of `reversibility':

\begin{definition} \label{def:rev}
(1) Let $G$ be a group with a fixed non-trivial homomorphism $w:G\to\{\pm1\}$.
An element $g\in G$ is said to be {\it reversible} if there is an element $h$ of the 
centralizer $C_G(g)$ of $g$ in $G$ such that $w(h)=-1$.
\\
(2) Let $M$ be a non-orientable surface.  A loop $\gamma$ on $M$ based at $p\in M$ is said to be
{\it reversible} if the element $[\gamma]\in\pi_1(M,p)$ is reversible
with respect to the orientation homomorphism $w:\pi_1(M,p)\to\{\pm1\}$.
If $\xi$ is an orientation reversing closed curve on $M$ based at $p$ such that
$[\xi^*\gamma\xi]=[\gamma]\in \pi_1(M,p)$ ({\it i.e.} $[\xi]\in
C_{\pi_1(M,p)}([\gamma])$), then we say that $\xi$ {\it reverses} $\gamma$.
Here $\xi^*$ denotes the inverse of the path $\xi$.
\end{definition} 

\begin{remarks} \label{rem:rev} (1) Let $(G,w)$ be as above.  
If $\varphi:G\to G$ is an isomorphism satisfying $w\circ\varphi=w$,
then, for any $g\in G$, $\varphi(g)$ is reversible if and only if $g$ is reversible.
For example, conjugation by an element of $G$ determines such an isomorphism; therefore,
reversibility is preserved by conjugation.
\\
(2) Let $G$ be as above.
Then, any element which is a power of an element $h$ with $w(h)=-1$ is reversible,
because $h$ and its power $h^n$ commute.
\\
(3) Let $M$ be a Klein bottle.  Its fundamental group
has a presentation $\langle a,b|abab^{-1}=1\rangle$, where $w(a)=1$ and $w(b)=-1$.
Any element can be written uniquely in the form $a^mb^n$ ($m,n\in\bbZ$).
Such an element is orientation preserving if and only if $n$ is even.
When $n$ is even, $a^mb^n$ is reversible if and only if $m=0$.
\\
(4) Let $M$ be a non-compact non-orientable complete hyperbolic surface
({\it e.g.} a punctured Klein bottle).
Toshio Sumi pointed out the following to the author: 
Its fundamental group $G=\pi_1(M,*)$ is a free group generated by a set $S$. 
If $g\in G$ is non-trivial and orientation-preserving, then 
its centralizer $C_G(g)$ is infinite cyclic (\cite{J}, p.11).
Since $g$ itself is an element of  $C_G(g)$,
$g$ is reversible if and only if it is a power of an orientation reversing element.
Write $g$ in the reduced form
$s_1^{\varepsilon_1}s_2^{\varepsilon_2}\dots s_n^{\varepsilon_n}$,
where $s_i$'s are elements of the set $S$ and $\varepsilon_i=\pm1$.
If this has the form $s_1^{\varepsilon_1}g' s_1^{-\varepsilon_1}$, then 
the reversibility of $g$ coincides with that of $g'$; therefore, we only need to consider 
the case when $s_1^{-\varepsilon_1}\ne s_n^{\varepsilon_n}$.  Then,
if it is a power of some element $h$, the reduced form of $h$ must start with
$s_1^{\varepsilon_1}$ and end with $s_n^{\varepsilon_1}$, so 
it is not difficult to determine the reversibility.
\\
(5) Let $M$ be a closed hyperbolic surface. For any non-trivial orientation
preserving element $g$ of $G=\pi_1(M,*)$, 
its centralizer $C_G(g)$ is infinite cyclic,
because $g$ preserves a geodesic $\gamma_g$ of the hyperbolic plane $\bbH^2$ and the action of
$C_G(g)$ on $\bbH^2$ restricts to a discrete action on $\gamma_g$.
See the similar argument given at the end of \S1.1 of \cite{FM} for the orientable surface case.
Therefore, $g$ is reversible if and only if the generator of $C_G(g)$ is orientation
reversing. Is there an algorithm to find the generator of $C_G(g)$ of a given element $g$?
\end{remarks}

\begin{proposition} \label{thm:rev-ht}
Reversibility of a closed curve depends only on its free homotopy class.
\end{proposition}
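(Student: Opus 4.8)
The plan is to reduce the assertion to the group-theoretic observation in Remark~\ref{rem:rev}(1). Two closed curves on $M$ are freely homotopic exactly when their fundamental-group classes correspond under a change-of-base-point isomorphism (possibly composed with a conjugation), so it suffices to know that such an isomorphism respects reversibility, and for that in turn it suffices to know that it intertwines the orientation homomorphisms $[\ell]\mapsto\varepsilon_\ell$ of Definition~\ref{def:sign}.

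Concretely, let $\gamma_1$ and $\gamma_2$ be closed curves on the non-orientable surface $M$, based at $p_1$ and $p_2$, and let $\{\gamma_t\}$ be a free homotopy between them. I would take its base-point trace $\alpha(t)=\gamma_t(a)$, a path from $p_1$ to $p_2$; the standard fact $\gamma_1\simeq\alpha\cdot\gamma_2\cdot\alpha^{*}$ (rel $p_1$) says that the change-of-base-point isomorphism $h_\alpha\colon\pi_1(M,p_1)\xrightarrow{\ \sim\ }\pi_1(M,p_2)$, $[\ell]\mapsto[\alpha^{*}\ell\,\alpha]$, carries $[\gamma_1]$ to $[\gamma_2]$. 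The one point needing verification is that $w_{p_2}\circ h_\alpha=w_{p_1}$, where $w_{p_i}\colon\pi_1(M,p_i)\to\{\pm1\}$ is the orientation homomorphism. This is short: lift $\alpha$ to a path in $\widetilde M$ starting at a chosen lift $\widetilde p_1$ of $p_1$, and take its endpoint $\widetilde p_2$ as the distinguished lift of $p_2$. For a loop $\ell$ at $p_1$, the lift of $\alpha^{*}\ell\,\alpha$ from $\widetilde p_2$ runs back along the lift of $\alpha$ to $\widetilde p_1$, then along the lift of $\ell$ to $T_\ell(\widetilde p_1)$, then along the $T_\ell$-image of the lift of $\alpha$ to $T_\ell(\widetilde p_2)$; hence the deck transformation attached to $\alpha^{*}\ell\,\alpha$ with respect to $\widetilde p_2$ is exactly $T_\ell$, so $\varepsilon_{\alpha^{*}\ell\,\alpha}=\varepsilon_\ell$. (Conceptually: transporting a local orientation backward along $\alpha$, once around $\ell$, and forward along $\alpha$ has the same net effect as going around $\ell$.)

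Finally I would conclude as in Remark~\ref{rem:rev}(1), whose proof applies verbatim to an isomorphism between two groups intertwining the orientation homomorphisms: if some $c\in C_{\pi_1(M,p_1)}([\gamma_1])$ satisfies $w_{p_1}(c)=-1$, then $h_\alpha(c)\in C_{\pi_1(M,p_2)}([\gamma_2])$ and $w_{p_2}(h_\alpha(c))=-1$, so $[\gamma_2]$ is reversible; applying $h_\alpha^{-1}$ gives the converse, and the special case $p_1=p_2$ (where $h_\alpha$ is a conjugation) is the case already noted in Remark~\ref{rem:rev}(1). Thus $\gamma_1$ is reversible if and only if $\gamma_2$ is. There is no deep obstacle here; the only step that takes any care is the compatibility $w_{p_2}\circ h_\alpha=w_{p_1}$ in the deck-transformation language of Definition~\ref{def:sign}, which is handled by the short computation above.
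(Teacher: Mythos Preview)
Your proof is correct and follows essentially the same approach as the paper's: both establish that the change-of-base-point isomorphism along the trace $\eta$ (your $\alpha$) intertwines the orientation homomorphisms by the identical deck-transformation computation, and then conclude that the conjugated reversing loop still reverses the new curve. The only cosmetic difference is that you package the conclusion via Remark~\ref{rem:rev}(1), whereas the paper writes out the explicit homotopy chain $(\eta^*\xi^*\eta)\gamma'(\eta^*\xi\eta)\simeq\eta^*\xi^*\gamma\xi\eta\simeq\eta^*\gamma\eta\simeq\gamma'$.
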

\begin{proof}
Suppose there is a free homotopy $H$ between two closed curves $\gamma$ and $\gamma'$
which keeps the curve closed, and let $\eta$ be the induced curve from the base point $p$
of $\gamma$ to the base point $p'$ of $\gamma'$.
Then, $H$ can be thought of as a homotopy between $\eta^*\gamma\eta$ and $\gamma'$
which fixes the base point $p'$.
Assume that $\gamma$ is reversible and let $\xi$ be an orientation reversing closed
curve based at $p$ such that there is a homotopy $K$ between $\gamma$
and $\xi^*\gamma\xi$ which keeps the base point fixed.
Consider the closed curve $\eta^*\xi\eta$ based at $p'$.
Let $\widetilde p$ be a lift of $p$ in $\widetilde M$, and let $\widetilde\eta$ denote
the lift of $\eta$ with initial point $\widetilde p$.
Then it can be easily checked that the deck transformation $T_{\widetilde\xi}$
is the same as the deck transformation 
$T_{{\widetilde\eta}^*\widetilde\xi\widetilde\eta}$; therefore, 
$\eta^*\xi\eta$ is orientation reversing.
Now we can show that $\gamma'$ is also reversible, because the `conjugate'
$(\eta^*\xi^*\eta)\gamma'(\eta^*\xi\eta)$ of $\gamma'$ by the orientation
reversing closed curve $\eta^*\xi\eta$ is homotopic to $\gamma'$ fixing the base point:
$(\eta^*\xi^*\eta)\gamma'(\eta^*\xi\eta) \overset{H}{\simeq}
\eta^*\xi^*\gamma\xi\eta \overset{K}{\simeq}
\eta^*\gamma\eta \overset{H}{\simeq} \gamma'$.
\end{proof}

\begin{definition}[conitinued from \ref{def:W1-W2}] \label{def:W3}
Let $\gamma$ be a regular closed curve on $M$.
\begin{itemize}
\item[(W3)] If $M$ is non-orientable, and $\gamma$ is orientation preserving and reversible, then
$W(\gamma)\in\bbZ_+=\{n\in\bbZ|n\geq 0\}$ is defined to be $|w_{\widetilde p}(\gamma)|$, 
where $\widetilde p$ is any lift of the base point $p$ of $\gamma$.
\end{itemize}
\end{definition}

$W(\gamma)$ is independent of the choice of $\widetilde p$ by \ref{thm:wn-lifts},
and that it is a free regular homotopy invariant by \ref{thm:wn-lifts} and \ref{thm:wn-reg-h},
in this case.

\begin{definition} \label{def:fwn}
Let $M$ be a surface with a tame conformally euclidean sturcture, $\gamma_0$ be 
a closed curve on $M$ based at $p_0\in M$ and $\widetilde \gamma_0$ be a lift of $\gamma_0$ in
the universal cover of $M$.
Suppose $\gamma$ is a regular closed curve on $M$
which is freely homotopic to $\gamma_0$.
In the cases
\begin{itemize}
\item[(W1)] $M$ is orientable,
\item[(W2)] $M$ is non-orientable, and $\gamma_0$ is orientation reversing,
\item[(W3)] $M$ is non-orientable, and $\gamma_0$ is orientation preserving and reversible,
\end{itemize}
we define the ({\it free}) {\it winding number} $W_{\widetilde\gamma_0}(\gamma)$ of $\gamma$
to be equal to $W(\gamma)$.
In the case
\begin{itemize}
\item[(W4)] $M$ is non-orientable, and $\gamma_0$ is orientation preserving
and non-reversible,
\end{itemize}
we define $W_{\widetilde\gamma_0}(\gamma)$ of $\gamma$ as follows:
Let $H:\gamma_0\simeq\gamma$
be a free homotopy which keeps the curve closed.
Then $H$ lifts to a free homotopy $\widetilde H:\widetilde\gamma_0\simeq \widetilde\gamma$
from $\widetilde\gamma_0$ to some lift $\widetilde\gamma$ of $\gamma$.
Let $\widetilde p$ be the initial point $\widetilde\gamma(a)$ of $\widetilde\gamma$.
The ({\it free}) {\it winding number} $W_{\widetilde\gamma_0}(\gamma)$ of
the regular closed curve $\gamma$ above is defined to be $w_{\widetilde p}(\gamma)$.
\end{definition}

\begin{proposition} \label{thm:well-def}
$W_{\widetilde\gamma_0}(\gamma)$ is well-defined and is a regular homotopy invariant of $\gamma$.
\end{proposition}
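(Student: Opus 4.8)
The plan is to reduce at once to case (W4) and then argue entirely by lifting homotopies to $\widetilde M$. In cases (W1)--(W3) one has $W_{\widetilde\gamma_0}(\gamma)=W(\gamma)$ by \ref{def:fwn}, and $W(\gamma)$ has already been observed to be well-defined (independent of the lift of the base point, by \ref{thm:wn-lifts}, using the absolute value in case (W3)) and a free regular homotopy invariant (by \ref{thm:wn-lifts} and \ref{thm:wn-reg-h}); since a regular homotopy is in particular a free regular homotopy, there is nothing more to prove in these cases. So I would devote the proof to case (W4): $M$ non-orientable, $\gamma_0$ orientation preserving and non-reversible. A preliminary remark is that any regular closed curve $\gamma$ freely homotopic to $\gamma_0$ is again orientation preserving (the orientation type is a free homotopy invariant, the orientation homomorphism being a homomorphism) and, by \ref{thm:rev-ht}, non-reversible, so that $w_{\widetilde p}(\gamma)\in\bbZ$ is the integer-valued based winding number.

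For well-definedness I would take two free homotopies $H,H'\colon\gamma_0\simeq\gamma$ keeping the curve closed, lift them starting at the fixed lift $\widetilde\gamma_0$ to obtain $\widetilde H\colon\widetilde\gamma_0\simeq\widetilde\gamma$ and $\widetilde H'\colon\widetilde\gamma_0\simeq\widetilde\gamma'$, and set $\widetilde p=\widetilde\gamma(a)$ and $\overline p=\widetilde\gamma'(a)$; these are two lifts of the base point $p$ of $\gamma$, and the two candidate values of $W_{\widetilde\gamma_0}(\gamma)$ are $w_{\widetilde p}(\gamma)$ and $w_{\overline p}(\gamma)$. Writing $\eta,\eta'$ for the base point traces of $H,H'$ (paths from $p_0$ to $p$), the loop $\zeta=\eta^*\eta'$ at $p$ is the base point trace of the free homotopy ``run $H$ backward, then run $H'$'' from $\gamma$ to itself keeping the curve closed, so $[\zeta]$ lies in $C_{\pi_1(M,p)}([\gamma])$. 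Since $\gamma$ is non-reversible, that centralizer contains no orientation reversing element, hence $\zeta$ is orientation preserving. On the other hand, the lift of $\zeta$ starting at $\widetilde p$ is $\widetilde\eta^*$ followed by $\widetilde\eta'$ (the base point traces of $\widetilde H$ and $\widetilde H'$), and it ends at $\overline p$; therefore, by \ref{def:sign}, $\varepsilon(\widetilde p,\overline p)=+1$ because $\zeta$ is orientation preserving. Now \ref{thm:wn-lifts} gives $w_{\overline p}(\gamma)=w_{\widetilde p}(\gamma)$, which is precisely the independence of the value from the choice of free homotopy.

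For the regular homotopy invariance in case (W4) I would take a regular homotopy $K\colon\gamma\simeq\gamma'$ with $\gamma,\gamma'$ both freely homotopic to $\gamma_0$, fix any free homotopy $H\colon\gamma_0\simeq\gamma$ keeping the curve closed, and form the composite free homotopy $\gamma_0\simeq\gamma'$ given by $H$ followed by $K$. Lifting this composite starting at $\widetilde\gamma_0$ produces $\widetilde H$ followed by the lift $\widetilde K$ of $K$ that begins at the terminal curve $\widetilde\gamma$ of $\widetilde H$ and ends at some lift $\widetilde\gamma'$ of $\gamma'$. By \ref{def:fwn} and the well-definedness just proved, $W_{\widetilde\gamma_0}(\gamma)=w_{\widetilde\gamma(a)}(\gamma)$ and $W_{\widetilde\gamma_0}(\gamma')=w_{\widetilde\gamma'(a)}(\gamma')$. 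But $\widetilde K$ is a lift of the regular homotopy $K$ between $\gamma$ and $\gamma'$ whose initial and terminal curves have initial points $\widetilde\gamma(a)$ and $\widetilde\gamma'(a)$, so \ref{thm:wn-reg-h} gives $w_{\widetilde\gamma(a)}(\gamma)=w_{\widetilde\gamma'(a)}(\gamma')$; hence $W_{\widetilde\gamma_0}(\gamma)=W_{\widetilde\gamma_0}(\gamma')$.

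The genuinely delicate point is the well-definedness in case (W4): it is exactly here that the hypothesis of non-reversibility is used, and used essentially, to force the centralizing loop $\zeta$ to be orientation preserving. Without it one obtains only $w_{\overline p}(\gamma)=\pm w_{\widetilde p}(\gamma)$, which is the very reason case (W3) records $|w_{\widetilde p}(\gamma)|$ rather than $w_{\widetilde p}(\gamma)$. The remaining difficulty is purely organizational: keeping the lifts of $\eta$, $\eta'$, $\zeta$, $H$, and $K$ coherent with the one fixed lift $\widetilde\gamma_0$, which I would dispatch by deriving every lift that appears from $\widetilde\gamma_0$ via the unique path and homotopy lifting property.
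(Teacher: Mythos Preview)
Your proof is correct and follows essentially the same route as the paper's: reduce to (W4), show well-definedness by observing that two free homotopies from $\gamma_0$ to $\gamma$ compose to a free self-homotopy of $\gamma$ whose base point trace centralizes $[\gamma]$, invoke non-reversibility to force that trace to be orientation preserving, and conclude via \ref{thm:wn-lifts}; then obtain regular homotopy invariance by composing a chosen $H\colon\gamma_0\simeq\gamma$ with the given regular homotopy and applying \ref{thm:wn-reg-h}. Your write-up is in fact slightly more careful than the paper's in one respect: you explicitly cite \ref{thm:rev-ht} to transfer non-reversibility from $\gamma_0$ to $\gamma$ before using it, whereas the paper simply asserts ``since $\gamma$ is non-reversible.''
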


\begin{proof}
We only need to consider the (W4) case.
Suppose there are two free homotopies $H:\gamma_0\simeq\gamma$ and $K:\gamma_0\simeq\gamma$.
These lift to homotopies $\widetilde H:\widetilde\gamma_0\simeq\widetilde\gamma$
and $\widetilde K:\widetilde\gamma_0\simeq\widehat\gamma$.
Let $\widetilde p$ and $\widehat p$ denote the initial points
$\widetilde\gamma(a)$ and $\widehat\gamma(a)$, respectively.
We want to show $w_{\widetilde p}(\gamma)=w_{\widehat p}(\gamma)$.
The composite of $H$ and $K$ determines a closed curve $\xi$ based at the base point $p$ of $\gamma$,
and the composed homotopy can be regarded as a homotopy between the `conjugate'
$\xi^*\gamma\xi$
and $\gamma$ which fixes the base point.  Since $\gamma$ is non-reversible,
$\xi$ must be orientation preserving.  
Therefore $\varepsilon(\widetilde p, \widehat p)=1$.
By, \ref{thm:wn-lifts}, we have $w_{\widetilde p}(\gamma)=w_{\widehat p}(\gamma)$.
This finishes the proof of the well-definedness.

Next suppose $\gamma$ and $\eta$ are regularly homotopic regular closed curves.
As in the definition above, let $\widetilde\gamma$ be the lift of $\gamma$ determined by 
the lift of a homotopy $H:\gamma_0\simeq \gamma$ which starts from $\widetilde\gamma_0$.
Then a free homotopy between $\gamma_0$ and $\eta$ is obtained by composing
$H$ and a regular homotopy $K$ between $\gamma$ and $\eta$.
Now the desired lift $\widetilde\eta$ which is used to define $W_{\widetilde \gamma_0}$
is determined by the lift $\widetilde K$ starting from $\widetilde\gamma$.  
By \ref{thm:wn-reg-h}, $w_{\widetilde p}(\gamma)=w_{\widetilde q}(\eta)$, where
$\widetilde p$ and $\widetilde q$ denotes the initial points of
$\widetilde\gamma$ and $\widetilde\eta$, respectively.
This completes the proof of the regular homotopy invariance.
\end{proof}


\noindent
{\bf Proof of Theorem \ref{thm:main2}.}

\noindent
We only need to prove that (2) implies (1).
So assume that $\gamma_1$ and $\gamma_2$ are both freely homotopic to $\gamma_0$ keeping the curve closed
and that their winding numbers $W_{\widetilde\gamma_0}(\gamma_1)$
and $W_{\widetilde\gamma_0}(\gamma_2)$ are the same.
Let $p_0$ be the base point of $\gamma_0$.
We can apply a finger move and a local rotation to $\gamma_2$ 
so that $\gamma_1$ and $\gamma_2$ have the same base point $p$ and the same base direction.
These operations are regular homotopies, so the winding numbers are unchanged.
Since they are freely homotopic through $\gamma_0$, 
there exists a smooth closed curve $\xi$ based at $p$ such that
if we regularly homotope $\gamma_2$ by a finger move along $\xi$
then $\gamma_1$ and the new $\gamma_2$ are homotopic fixing the base point.
Let $\widetilde p$ be a lift of $p$ in the universal cover $\widetilde M$, and
$\widetilde\gamma_1$ and $\widetilde\gamma_2$ be the lifts of $\gamma_1$ and $\gamma_2$
with the initial point $\widetilde p$, respectively.
Since $\gamma_j$'s are homotopic fixing the base point,
$\widetilde\gamma_j$'s have the same terminal point.
Note that, if $M$ is non-orientable and $\gamma_j$'s are reversible,
then we have $|w_{\widetilde p}(\gamma_1)|=|w_{\widetilde p}(\gamma_2)|$
and that, otherwise, we have $w_{\widetilde p}(\gamma_1)=w_{\widetilde p}(\gamma_2)$.
So suppose $M$ is non-orientable, $\gamma_j$'s are reversible,
and  $w_{\widetilde p}(\gamma_1)=-w_{\widetilde p}(\gamma_2)$.
Then a finger move of $\gamma_2$ along a suitably chosen closed loop
reverses the sign of $w_{\widetilde p}(\gamma_2)$ without changing the homotopy class;
so we may assume $w_{\widetilde p}(\gamma_1)=w_{\widetilde p}(\gamma_2)$ in every case.

Now, by \ref{thm:main1}, $\gamma_1$ and $\gamma_2$ are regularly homotopic fixing the base point.
This finishes the proof of Theorem \ref{thm:main2}.
\hfill $\square$

\bigbreak

\begin{example} \label{ex-trivial} Here are examples of regular closed curves
with trivial free winding numbers.
\begin{itemize}
\item[(1)] figure eight curves contained in discs
\item[(2)] geodesics that are regularly closed
\item[(3)] closed horocycles on hyperbolic surfaces (Figure 2 shows the case where
the lifts are part of a horizontal horocycle in the upper half plane model of $\bbH^2$.
Notations are the same as in \ref{def:ext-a}.)
\end{itemize}
\end{example}

\begin{figure}[htbp]
  \begin{center}
   \includegraphics[clip, width=60mm]{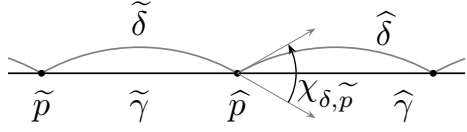}
  \end{center}
\caption{Lifts of $\gamma$ and $\delta$}\label{fig:lifts}
\end{figure}

\bigbreak

Finally, we check the dependence of the winding number on the choice of
$\widetilde\gamma_0$.

\begin{proposition} \label{thm:fwn-choice}
Assume that $M$ is non-orientable and that $\gamma_0$ is an orientation preserving
and non-reversible closed curve with base point $p_0$.
\par\noindent
{\rm (1)} Suppose that $\widetilde{p}_0$ and  $\widehat{p}_0$ are lifts of $p_0$ in $\widetilde M$, and 
$\widetilde\gamma_0$ and $\widehat{\gamma}_0$ are the lifts of $\gamma_0$ with the initial point $\widetilde p_0$ and $\widehat{p}_0$, respectively.
Then we have 
$W_{\widehat\gamma_0}(\gamma)=\varepsilon(\widetilde{p}_0, \widehat{p}_0)
W_{\widetilde\gamma_0}(\gamma).$
\par\noindent
{\rm (2)} Suppose that there is a free homotopy $K$ from another closed curve $\gamma_1$
to $\gamma_0$ which keeps the curve closed and  suppose that a lift $\widetilde K$ of $K$
is a homotopy from $\widetilde\gamma_1$ to $\widetilde\gamma_0$.
Then we have $W_{\widetilde\gamma_0}(\gamma)=W_{\widetilde\gamma_1}(\gamma).$
\end{proposition}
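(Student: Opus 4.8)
The plan is to deduce both parts from Proposition~\ref{thm:wn-lifts} and Proposition~\ref{thm:well-def}, using only the bookkeeping of lifts of homotopies. Two preliminary remarks are needed. First, since $\gamma_0$ is orientation preserving and non-reversible and $\gamma_1$ is freely homotopic to it keeping the curve closed, $\gamma_1$ is also orientation preserving (the sign of the associated deck transformation is a free homotopy invariant, by the same reasoning as in the proof of Proposition~\ref{thm:rev-ht}) and non-reversible (Proposition~\ref{thm:rev-ht}); likewise $\gamma$ itself is orientation preserving and non-reversible. Hence $\gamma_0$ and $\gamma_1$ are both in case (W4), so $W_{\widetilde\gamma_0}(\gamma)$ and $W_{\widetilde\gamma_1}(\gamma)$ are both defined. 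Second, if $H\colon\gamma_0\simeq\gamma$ is any free homotopy keeping the curve closed, $\widetilde H$ its lift starting at a lift $\widetilde\gamma_0$ of $\gamma_0$, and $T$ any deck transformation, then $T\circ\widetilde H$ is the lift of $H$ starting at $T\circ\widetilde\gamma_0$; this follows from $p_M\circ T=p_M$ and uniqueness of lifts.

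For part (1), I would fix a free homotopy $H\colon\gamma_0\simeq\gamma$, lift it from $\widetilde\gamma_0$ to $\widetilde H\colon\widetilde\gamma_0\simeq\widetilde\gamma$, and put $\widetilde p=\widetilde\gamma(a)$, so that $W_{\widetilde\gamma_0}(\gamma)=w_{\widetilde p}(\gamma)$ by Definition~\ref{def:fwn}. Let $T$ be the deck transformation with $T\widetilde p_0=\widehat p_0$; then $\widehat\gamma_0=T\circ\widetilde\gamma_0$, so by the second remark the lift of the same $H$ starting at $\widehat\gamma_0$ is $T\circ\widetilde H$, ending at $T\circ\widetilde\gamma$ with initial point $\widehat p:=T\widetilde p$, and therefore $W_{\widehat\gamma_0}(\gamma)=w_{\widehat p}(\gamma)$. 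Since a deck transformation is determined by the image of a single point, $T$ is also the deck transformation carrying $\widetilde p$ to $\widehat p$, so $\varepsilon(\widetilde p,\widehat p)=\varepsilon(\widetilde p_0,\widehat p_0)$. Proposition~\ref{thm:wn-lifts} then gives $w_{\widehat p}(\gamma)=\varepsilon(\widetilde p,\widehat p)\,w_{\widetilde p}(\gamma)$, i.e.\ $W_{\widehat\gamma_0}(\gamma)=\varepsilon(\widetilde p_0,\widehat p_0)\,W_{\widetilde\gamma_0}(\gamma)$.

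For part (2), I would use that by Proposition~\ref{thm:well-def} the number $W_{\widetilde\gamma_1}(\gamma)$ may be computed from any free homotopy from $\gamma_1$ to $\gamma$ keeping the curve closed. Choose a free homotopy $H\colon\gamma_0\simeq\gamma$, lift it from $\widetilde\gamma_0$ to $\widetilde H\colon\widetilde\gamma_0\simeq\widetilde\gamma$, and set $\widetilde p=\widetilde\gamma(a)$, so $W_{\widetilde\gamma_0}(\gamma)=w_{\widetilde p}(\gamma)$. The composite $K*H$ is a free homotopy from $\gamma_1$ to $\gamma$ keeping the curve closed, and its lift starting at $\widetilde\gamma_1$ first follows $\widetilde K$ from $\widetilde\gamma_1$ to $\widetilde\gamma_0$ and then $\widetilde H$ from $\widetilde\gamma_0$ to $\widetilde\gamma$; hence it ends at the same lift $\widetilde\gamma$, with the same initial point $\widetilde p$. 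Thus $W_{\widetilde\gamma_1}(\gamma)=w_{\widetilde p}(\gamma)=W_{\widetilde\gamma_0}(\gamma)$.

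There is no real obstacle here: the substantive content is already in Propositions~\ref{thm:wn-lifts} and~\ref{thm:well-def}, and what remains is the careful tracking of which lift of $\gamma$ is singled out by a given homotopy. The one slightly delicate point is the identity $\varepsilon(\widetilde p,\widehat p)=\varepsilon(\widetilde p_0,\widehat p_0)$, which I would stress holds because both signs are read off from the single deck transformation $T$ sending $\widetilde p_0$ to $\widehat p_0$.
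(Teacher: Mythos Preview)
Your proof is correct and follows essentially the same route as the paper's: for (1) you apply the deck transformation $T$ with $T\widetilde p_0=\widehat p_0$ to the lifted homotopy $\widetilde H$ and then invoke Proposition~\ref{thm:wn-lifts}, and for (2) you concatenate $\widetilde K$ with $\widetilde H$ to see that the same lift $\widetilde\gamma$ (hence the same $\widetilde p$) is selected. Your added justification that $\varepsilon(\widetilde p,\widehat p)=\varepsilon(\widetilde p_0,\widehat p_0)$ because both are read off from the single deck transformation $T$, and your preliminary check that $\gamma_1$ is also in case (W4), are welcome clarifications that the paper leaves implicit.
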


\begin{proof} Let $H$, $\widetilde H$, $\widetilde\gamma$ and $\widetilde p$ be 
as in \ref{def:fwn} (W4).
\par\noindent
(1) Let $T$ denote the deck transformation which sends $\widetilde p_0$ to $\widehat{p}_0$.
Then $T\circ\widetilde H$ is a homotopy between $\widehat\gamma_0=T\circ \widetilde\gamma_0$
and $T\circ\widetilde\gamma$; therefore, we have
\[
W_{\widehat\gamma_0}(\gamma)=w_{T(\widetilde p)}(\gamma)
\buildrel{\ref{thm:wn-lifts}}\over{=}\varepsilon(\widetilde p,T(\widetilde p))w_{\widetilde p}(\gamma)
=\varepsilon(\widetilde p_0,T(\widetilde p_0))w_{\widetilde p}(\gamma)
=\varepsilon(\widetilde p_0,\widehat p_0)W_{\widetilde\gamma_0}(\gamma).
\]
\par\noindent
(2) If we compose the two homotopies $\widetilde K$ and $\widetilde H$, then it is a 
homotopy from $\widetilde\gamma_1$ to $\widetilde\gamma$ and it covers the composite
of $K$ and $H$. Therefore, $W_{\widetilde\gamma_0}(\gamma)$
and $W_{\widehat\gamma_0}(\gamma)$ are both equal to $w_{\widetilde p}(\gamma)$.
\end{proof}

\bigbreak

\end{document}